\setlist[enumerate]{leftmargin=*, label=(\roman*)}
\numberwithin{equation}{section}
\def\E{\mathbb{E}}
\def\F{\mathcal{F}}
\def\N{\mathbb{N}}
\def\n{\mathcal{N}}
\def\P{\mathbb{P}}
\def\S{\mathbb{S}}
\def\R{\mathbb{R}}
\def\T{\mathcal{T}}
\renewcommand{\d}{{\rm d}}
\def\L{\mathcal{L}}
\def\epsilon{\varepsilon}
\def\phi{\varphi}
\def\one{\mathds{1}}
\DeclareMathOperator{\id}{id}
\DeclareMathOperator{\loc}{loc}
\DeclareMathOperator{\supp}{supp}
\DeclareMathOperator{\sym}{sym}
\newtheorem{theorem}{Theorem}[section]
\newtheorem{lemma}[theorem]{Lemma}
\newtheorem{corollary}[theorem]{Corollary}
\theoremstyle{definition}
\newtheorem{definition}[theorem]{Definition}
\newtheorem{example}[theorem]{Example}
\newtheorem{remark}[theorem]{Remark}
\newtheorem{assumption}[theorem]{Assumption}
\begin{document}

\title[Viscous  Hamilton-Jacobi equations in exponential Orlicz hearts]
{Viscous Hamilton-Jacobi equations in \\ exponential Orlicz hearts} 

\author{Jonas Blessing}
\address{Department of Mathematics and Statistics, University of Konstanz}
\email{jonas.blessing@uni-konstanz.de}

\author{Michael Kupper}
\address{Department of Mathematics and Statistics, University of Konstanz}
\email{kupper@uni-konstanz.de}

\date{\today}

\thanks{We thank Robert Denk and Max Nendel  for helpful discussions.}

\begin{abstract}
We provide a semigroup approach to the viscous Hamilton-Jacobi equation. It turns out that 
exponential Orlicz hearts are suitable spaces to handle the (quadratic) non-linearity of the 
Hamiltonian. Based on an abstract extension result for nonlinear semigroups on spaces of continuous 
functions, we represent the solution of the viscous Hamilton-Jacobi equation as a strongly 
continuous convex semigroup on an exponential Orlicz heart. As a result, the solution depends 
continuously on the initial data.  We further determine the symmetric Lipschitz set which is 
invariant under the semigroup. This  automatically yields a priori estimates and regularity in 
Sobolev spaces.  In particular, on the domain restricted to the symmetric Lipschitz set, 
the generator can be explicitly determined and linked with the viscous Hamilton-Jacobi equation.

\smallskip \noindent 
\emph{Key words:} viscous Hamilton-Jacobi equation,  Orlicz heart,  convex semigroups, symmetric 
 Lipschitz set 
 
\smallskip\noindent 
\emph{AMS 2020 Subject Classification:} Primary 35K91, 47H20; Secondary 35A01,  35B45, 35B65.
\end{abstract}

\maketitle

\setcounter{tocdepth}{1}

\section{Introduction}

In this article, we provide a semigroup approach to the viscous Hamilton-Jacobi equation
\begin{equation} \label{eq:CP}
	\begin{cases}
		\partial_t u(t,x)=\frac{1}{2}\Delta u(t,x)+H(\nabla u(t,x)),  & (t,x)\in (0,\infty)\times\R^d, \\
		u(0,x)=f(x),  & x\in\R^d,
	\end{cases}
\end{equation}
where $H\colon\R^d\to\R$ is a convex function growing at most quadratically.  A semigroup is a 
family $(S(t))_{t\geq 0}$ of (not necessarily linear) operators $S(t)\colon X\to X$ such that 
$S(0)=\id_X$ and $S(s+t)=S(s)S(t)$ for all $s,t\geq 0$.  In case that $X$ is a Banach lattice,  the 
semigroup is called convex if $S(t)(\lambda x+(1-\lambda)y)\leq \lambda S(t)x+(1-\lambda)S(t)y$
for all $x,y\in X$, $t\ge 0$ and $\lambda\in [0,1]$.  Convex semigroups have been systematically 
studied in~\cite{DKN19}.  There,  it is shown that convex semigroups on $L^p$-like spaces fulfil
properties which are similar to the well-established theory of linear semigroups.  In particular,  the 
generator of a convex semigroup is a closed operator, its domain is invariant under the semigroup,  
and the associated abstract Cauchy problem is classically well-posed.  Since the differential operator 
on the right-hand side of equation~\eqref{eq:CP} is convex, the aim of this paper is to construct an 
associated convex semigroup for which we can apply the results of~\cite{DKN19}.  In the special case 
where $H$ is sublinear, the existence of such a semigroup on $L^p$, for $p\in [1,\infty)$,  has been 
established in~\cite[Example~5.3]{DKN19}.  However,  as soon as $H$ growths superlinear,  the 
approach in~\cite{DKN19} fails in $L^p$.  In this case,  a suitable space is an exponential Orlicz heart.  
This choice is motivated by the fact,  that in the special case $H(x):=|x|^2/2$,  an explicit solution of 
equation~\eqref{eq:CP} is given by the formula
\begin{equation} \label{eq:square}
	u(t,x):=\log\left((2\pi t)^{-\frac{d}{2}}\int_{\R^d}\exp(f(x+y))e^{-\frac{|y|^2}{2t}}\,\d y\right).  
\end{equation}
Hence,  it seems natural to consider functions that are exponentially integrable.  In fact,  for $H(x)=a|x|^p$ 
with $a>0$,  $p\geq 2$ and $f\in L^1_{\loc}$,  it was shown in~\cite[Proposition~3.1]{BSW2002} that 
if equation~\eqref{eq:CP} has a classical solution $u\in C^{1,2}((0,\infty)\times\R^d)$ with 
$\lim_{t\downarrow 0}u(t)=f$ in $L^1_{\loc}$, then $\exp(af)\in L^1_{\loc}$.  Similar integrability
assumptions on the initial data have been made in~\cite{FPR2001,DGLS2006}.  

The construction of the semigroup consists of two steps.  First, our approach in~\cite{BK20} 
yields a semigroup $(S(t))_{t\geq 0}$ on the space $C_0$ of all continuous functions which vanish 
at infinity. The idea is not to linearise the equation,  but to find a family $(I(t))_{t\geq 0}$ of convex 
operators $I(t)\colon C_0\to C_0$ which satisfy
\[ \lim_{t\downarrow 0}\left\|\frac{I(t)f-f}{t}-\frac{\Delta f}{2}-H(\nabla f)\right\|_\infty=0 \]
for $f$ sufficiently smooth.  We make the ansatz
\begin{equation} \label{eq:def I}
	\big(I(t)f\big)(x):=\sup_{\lambda\in\R^d}\left((2\pi t)^{-\frac{d}{2}}\int_{\R^d} 
	f(x+y+\lambda t)e^{-\frac{|y|^2}{2t}}\,\d y-L(\lambda)t\right),  
\end{equation}
where $L$ denotes the convex conjugate of $H$.  Then,  we iterate these operators along the dyadic
numbers and obtain the semigroup as the limit along a subsequence, i.e., 
\begin{equation} \label{eq:chernoff}
	\lim_{l\to\infty}\|S(t)f-I(2^{-n_l}t)^{2^{n_l}}f\|_\infty=0. 
\end{equation}
By construction,  the key properties of $(I(t))_{t\geq 0}$ transfer to $(S(t))_{t\geq 0}$.  For the purpose 
of this paper,  the results gathered in Theorem~\ref{thm:gexp C_0} are sufficient,  see also~\cite{BK20} 
for further details.  Formulas of type~\eqref{eq:chernoff} are called Chernoff approximation
or Trotter formula,  see~\cite{chernoff1968,chernoff1974,trotter1958,trotter1959}.  An alternative approach 
is the so-called Nisio construction,  where the semigroup $(S(t))_{t\geq 0}$ is obtained as a monotone 
limit,  see~\cite{Nisio76,DKN20a,NR19}.  We also want to mention the classical approach to nonlinear 
semigroups based on maximal monotone or m-accretive operators, 
see~\cite{Barbu2010,BC1991,Brezis71,Evans78,Kato67}.  However, the required assumptions are in general 
difficult to verify for convex Hamilton-Jacobi-Bellman-type equations,  see~\cite[Example~4.2]{DKN20} 
for a counterexample.  While spaces of continuous functions are useful for the construction of convex 
semigroups,  they have the disadvantage that the domain of the generator is in general not invariant,
see~\cite[Example~4.4]{DKN20} for a counterexample.  However, the invariance of the domain (or a 
suitable subspace thereof) is essential to link the semigroup with the associated Cauchy problem. This 
can be achieved by extending the semigroup to a larger $L^p$-like space,  which is the main focus of this 
article.  

Second,  we extend $(S(t))_{t\geq 0}$ from $C_0$ to the Orlicz heart $M^\Phi$ of all functions 
that are exponentially integrable.  The key is to find a dominating family of operators 
$T(t)\colon M^\Phi\to M^\Phi$, which satisfy $|I(t)f|\leq T(t)|f|$ for all $t\geq 0$ and $f\in C_0\cap M^\Phi$.  
In our case,  the operators $T(t)$ will be defined by a formula similar to~\eqref{eq:square},  where the 
exponential will be replaced by a suitable function~$\phi$.  From the construction of $(S(t))_{t\geq 0}$ 
outlined above,  we obtain $|S(t)f|\leq T(t)|f|$.  The boundedness of $T(t)$ ensures boundedness of $S(t)$, 
which by convexity implies the Lipschitz continuity of $S(t)$ that allows for an extension to the Orlicz 
heart $M^\Phi$, see Theorem~\ref{thm:ext} and Theorem~\ref{thm:gexp ext}. 

To link the extended semigroup $S(t)_{t \ge 0}$ with the viscous Hamilton-Jacobi equation,  its generator 
$A$ has to be identified with the right-hand side of equation~\eqref{eq:CP}. However,  determining the 
generator on the whole domain $D(A)$ seems to be rather difficult.  We therefore focus on the (symmetric) 
Lipschitz set, which is also invariant under the semigroup,  see~\cite[Section~5]{BK20}.  The Lipschitz set 
consists of all functions $f$ such that the mapping $t\mapsto S(t)f$ is not only continuous,  but Lipschitz 
continuous. The Lipschitz set is crucial for the construction of $(S(t))_{t\geq 0}$ on $C_0$, because it 
enables us to extend the semigroup from the dyadic numbers to all $t\geq 0$.  We explicitly determine the 
symmetric Lipschitz set as the domain of the Laplacian in $L^\infty$ restricted to $C_0\cap M^\Phi$.  Then, 
on the domain $D(A)$ restricted to the symmetric Lipschitz set, the generator is given by 
$Af=\frac{1}{2}\Delta f+H(\nabla f)$,  where $\Delta f$ and $\nabla f$ are distributional derivatives. This allows 
to solve the Cauchy problem~\eqref{eq:CP} and estimate the solution,see Theorem~\ref{thm:reg}.

Our approach is different to the established theory of PDEs, and can be placed in context as follows. For 
$H(x)=|x|^q$ with $q>0$, and bounded continuous initial data,  existence and uniqueness of classical 
solutions in H\"older spaces have been established in~\cite{GGK2003}.  
The case $H(x)=a|x|^q$ in $L^p$-spaces is studied in~\cite{BSW2002}.  The focus is on the study of mild solutions,
which are also classical solutions by the theory of parabolic equations.  The choice of the initial data depends 
on whether $q<2$ or $q\geq 2$ and $a<0$ or $a>0$.  We want to point out that existence and uniqueness
can both fail if $a>0$, $q\geq 2$ or $p$ is to small.  As mentioned before,  in~\cite{FPR2001} the authors
made the assumption that the initial value is exponentially integrable.  More precisely, 
the existence of a weak solution in $L^p$ on a bounded domain $\Omega$ is shown, as well as the implication
\[ \int_\Omega e^{c|f(x)|}\,\d x<\infty \;\Longrightarrow\; 
	\sup_{t\in [0,T]}\int_\Omega e^{c|u(t,x)|}\,\d x<\infty \]
for a suitable constant $c\geq 0$.  This is similar to our statement $S(t)\colon M^\Phi\to M^\Phi$.  
Equations with degenerate coercivity and quadratic gradient term have been studied in~\cite{DGLS2006}. 
For non-convex Hamiltonians $H$,  existence and uniqueness of viscosity solutions has been 
investigated in~\cite{Davini19}.  There, for very regular initial data,  it is also shown how one can obtain
smooth solutions from the classical theory in~\cite{LSU1968}.  For long time behaviour and the 
question of convergence to a stationary solution,  we refer to~\cite{Gilding2005,SZ2006}.  
Concerning the regularity of solutions,  we want to mention both H\"older and Lipschitz estimates
for viscosity and weak solutions~\cite{AT2015,CS2012,CG2020a} as well as maximal
$L^p$-regularity for functions defined on the torus~\cite{CG2020}.  
From a stochastic point of view, the study of quadratic backward differential equations
leads to PDEs with quadratic gradient terms,  see~\cite{DHX2011,CPX2017,Kobylanski2000,BH2006}.
In particular,  the solution of the PDE has a stochastic representation similar to equation~\eqref{eq:square}
or the fundamental solution of the linear heat equation.\\

The paper is organized as follows.  In Section~\ref{sec:Oheart},  we collect some basic concepts and 
results in Orlicz hearts. In Section~\ref{sec:theory}, we study the extension of convex semigroups to 
Orlicz hearts and analyze their main properties.  In Section~\ref{sec:VHJ},  we apply these results to 
the viscous Hamilton-Jacobi equation.

\section{Preliminaries on Orlicz spaces}
\label{sec:Oheart}

Fix $d\in\N$, and denote by $\lambda$ the Lebesgue measure on $\R^d$. Let $L^0:=L^0(\R^d;\R)$ 
be the set of all Borel measurable functions $f\colon\R^d\to\R$, where two of them are identified if 
they coincide $\lambda$-almost everywhere (a.e.).  On $L^0$ we consider the pointwise partial order 
defined as $f\leq g$ if and only if  $f(x)\leq g(x)$ for $\lambda$-almost every $x\in\R^d$.  Furthermore,  
if the integral is well-defined, we write
\[ \int_{\R^d}f\,\d\lambda:=\int_{\R^d}f(x)\, \d x:=\int_{\R^d}f(x)\, \lambda(\d x).  \]
Throughout this section, let $\Phi\colon\R\to\R_+$ be a Young function.  Here, we follow the definition 
in~\cite{Rao},  i.e.,  $\Phi$ is convex,  $\Phi(0)=0$,  $\Phi(-x)=\Phi(x)$ and $\lim_{x\to\infty}\Phi(x)=\infty$. 
Furthermore, we assume that $\Phi(x)=0$ if and only if $x=0$. The corresponding Orlicz heart is defined as
\[ M^\Phi:=\left\{f\in L^0\colon \int_{\R^d}\Phi\left(\frac{f}{m}\right)\d\lambda<\infty\mbox{ for all } m>0\right\}.\]
We endow $M^\Phi$ with the Luxemburg norm 
\[ \|f\|_\Phi:=\inf\left\{m>0\colon \int_{\R^d}\Phi\left(\frac{f}{m}\right)\d\lambda\leq 1\right\}.  \]
Then,  $(M^\Phi,\|\cdot\|_\Phi , \leq)$ is a Banach lattice.  Moreover, for every sequence 
$(f_n)_{n\in\N}\subset M^\Phi$ and $f\in M^\Phi$ such that
\begin{equation} \label{eq:conv}
	\lim_{n\to\infty}\int_{\R^d}\Phi\left(\frac{f-f_n}{m}\right)\d\lambda=0 \quad\mbox{for all } m\in (0,1],
\end{equation}
it follows directly from the definition of the norm that  $\lim_{n\to\infty}\|f-f_n\|_\Phi=0$.

We frequently work with the following versions of the Luxemburg norm on $M^\Phi$, which for each 
$R\geq 1$ are defined as 
\[ \|f\|_{\Phi,R}=\inf\left\{m>0\colon\int_{\R^d}\Phi\left(\frac{f}{m}\right)\d\lambda\leq R\right\}.  \]
It is a straightforward application of the monotone convergence theorem that for each 
$f\in M^\Phi\backslash\{0\}$, the infimum in the previous equation is attained at $\|f\|_{\Phi,R}$.
The subsequent result shows that all norms $\|\cdot\|_{\Phi,R}$ are equivalent. In particular, 
$(M^\Phi,\|\cdot\|_{\Phi,R},\leq)$ is a Banach lattice for all $R\geq 1$.

\begin{lemma} \label{lem:norm}
 It holds $\|f\|_{\Phi,R}\leq\|f\|_\Phi\leq R\|f\|_{\Phi,R}$ for all $f\in M^\Phi$ and $R\geq 1$. 
\end{lemma}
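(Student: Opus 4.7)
My plan is to prove the two inequalities separately, directly from the definitions.

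For the first inequality $\|f\|_{\Phi,R}\leq\|f\|_\Phi$, I would simply observe that since $R\geq 1$, every $m>0$ satisfying $\int_{\R^d}\Phi(f/m)\,\d\lambda\leq 1$ also satisfies $\int_{\R^d}\Phi(f/m)\,\d\lambda\leq R$. Hence the admissible set in the definition of $\|f\|_{\Phi,R}$ contains the one defining $\|f\|_\Phi$, and the infimum over the larger set is no greater.

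For the second inequality $\|f\|_\Phi\leq R\|f\|_{\Phi,R}$, I would use the fact (noted in the paragraph preceding the lemma) that the infimum defining $\|f\|_{\Phi,R}$ is attained at $m:=\|f\|_{\Phi,R}$, so that $\int_{\R^d}\Phi(f/m)\,\d\lambda\leq R$. The key analytic ingredient is then the elementary convexity bound $\Phi(x/R)\leq \Phi(x)/R$ for $R\geq 1$, which follows from $\Phi(0)=0$ and convexity via $\Phi(x/R)=\Phi(\tfrac{1}{R}x+(1-\tfrac{1}{R})\cdot 0)\leq \tfrac{1}{R}\Phi(x)$. Applying this pointwise with $x=f(y)/m$ and integrating gives $\int_{\R^d}\Phi(f/(Rm))\,\d\lambda\leq \tfrac{1}{R}\int_{\R^d}\Phi(f/m)\,\d\lambda\leq 1$, so $Rm$ is admissible in the definition of $\|f\|_\Phi$, yielding $\|f\|_\Phi\leq Rm=R\|f\|_{\Phi,R}$.

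The case $f=0$ is trivial and should be handled separately at the start, since both norms vanish and the attainment remark only applies to $f\neq 0$. No step here looks genuinely difficult; the only mild subtlety is making sure one invokes the attainment of the infimum rather than working with an $\epsilon$-approximating sequence, which would otherwise force an additional limiting argument.
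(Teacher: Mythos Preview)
Your proposal is correct and follows essentially the same approach as the paper: both handle $f=0$ separately, obtain the first inequality directly from the set inclusion of admissible $m$'s, and for the second inequality use the attainment of the infimum at $m=\|f\|_{\Phi,R}$ together with the convexity bound $\Phi(x/R)\leq \Phi(x)/R$ to conclude that $Rm$ is admissible for $\|f\|_\Phi$.
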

\begin{proof}
 For $f=0$, the statement is obvious. So, let $f\in M^\Phi\backslash\{0\}$ and $R\geq 1$.  On the one 
 hand, by definition of the norms, we have $\|f\|_{\Phi,R}\leq\|f\|_\Phi$. On the other hand, since $\Phi$ 
 is convex and $\Phi(0)=0$,  we obtain
 \[ \int_{\R^d}\Phi\left(\frac{f}{R\|f\|_{\Phi,R}}\right)\d\lambda
 	\leq\frac{1}{R}\int_{\R^d}\Phi\left(\frac{f}{\|f\|_{\Phi,R}}\right)\d\lambda\leq 1.  \]
 This shows that $\|f\|_\Phi\leq R\|f\|_{\Phi,R}$.
\end{proof}

Let $B_R(f,r):=\{g\in M^\Phi\colon\|g-f\|_{\Phi,R}\leq r\}$ denote the closed ball around $f\in M^\Phi$ 
with radius $r\geq 0$. Furthermore, set $B_\Phi(f,r):=B_1(f,r)$ and $B_R(r):=B_R(0,r)$.

\begin{remark} \label{rem:union}
 For every $r>0$, we have 
 \[ M^\Phi=\bigcup_{R\geq 1} B_R(r).  \]
 Indeed, let $f\in M^\Phi$ and $r>0$.  By definition of the modified Luxembourg norm and the Orlicz 
 heart,  we have $f\in B_R(r)$ for 
 \[ R:=1+\int_{\R^d}\Phi\left(\frac{f}{r}\right)\d\lambda<\infty.  \]
\end{remark}

Let $C_c^\infty:=C_c^\infty(\R^d;\R)$ be the set of all infinitely differentiable functions 
$f\colon\R^d\to\R$ with compact support.

\begin{lemma} \label{lem:molify}
 Let $\eta\in C_c^\infty$ with $\eta\geq 0$ and $\int_{\R^d}\eta \,\d\lambda =1$.  Then, it holds 
 $f*\eta\in M^\Phi$ and $\|f*\eta\|_{\Phi,R}\leq\|f\|_{\Phi,R}$ for all $f\in M^\Phi$ and $R\geq 1$. 
\end{lemma}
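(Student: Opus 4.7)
The plan is to apply Jensen's inequality fibrewise with respect to the probability measure $\eta(y)\,\d y$ and then to use Fubini-Tonelli together with translation invariance of the Lebesgue measure. First I would observe that $M^\Phi\subseteq L^1_{\loc}$: convexity of $\Phi$ together with $\Phi(0)=0$ implies that $x\mapsto\Phi(x)/x$ is non-decreasing on $(0,\infty)$, so $|x|\le\Phi(|x|)/\Phi(1)$ whenever $|x|\ge 1$. Splitting any compact set $K$ into $\{|f|<1\}$ and $\{|f|\ge 1\}$ and applying this bound gives $\int_K|f|\,\d\lambda\le\lambda(K)+\Phi(1)^{-1}\int_{\R^d}\Phi(|f|)\,\d\lambda<\infty$ for every $f\in M^\Phi$. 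Since $\eta$ is continuous with compact support, this ensures that $(f*\eta)(x)=\int_{\R^d}f(x-y)\eta(y)\,\d y$ is well-defined pointwise and that $f(x-\cdot)\in L^1(\eta\,\d\lambda)$, so for each $m>0$ the pointwise Jensen inequality
\[ \Phi\left(\frac{(f*\eta)(x)}{m}\right)\le\int_{\R^d}\Phi\left(\frac{f(x-y)}{m}\right)\eta(y)\,\d y \]
is at my disposal.

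Integrating this inequality in $x$, exchanging the order of integration by Fubini-Tonelli (permissible since the integrand is non-negative), and using translation invariance of $\lambda$ yields
\[ \int_{\R^d}\Phi\left(\frac{(f*\eta)(x)}{m}\right)\d x\le\int_{\R^d}\eta(y)\int_{\R^d}\Phi\left(\frac{f(x-y)}{m}\right)\d x\,\d y=\int_{\R^d}\Phi\left(\frac{f(x)}{m}\right)\d x. \]
Since $f\in M^\Phi$, the right-hand side is finite for every $m>0$, so $f*\eta\in M^\Phi$. For $f\ne 0$, I would then set $m:=\|f\|_{\Phi,R}>0$; the infimum in the definition of $\|\cdot\|_{\Phi,R}$ is attained at this value (as noted in the paragraph preceding the lemma via monotone convergence), hence the right-hand side is bounded by $R$, and the definition of $\|\cdot\|_{\Phi,R}$ gives $\|f*\eta\|_{\Phi,R}\le m=\|f\|_{\Phi,R}$. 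The case $f=0$ is trivial.

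There is no substantial obstacle in this argument. The only point needing care is the applicability of Jensen's inequality, which reduces to having $f(x-\cdot)\eta\in L^1(\R^d)$ for each $x$; this is handled once and for all by the embedding $M^\Phi\subseteq L^1_{\loc}$ combined with the compact support of $\eta$, after which the rest is routine.
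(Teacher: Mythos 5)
Your proof is correct and follows essentially the same route as the paper's: Jensen's inequality with respect to the probability measure $\eta\,\d\lambda$, then Fubini--Tonelli and translation invariance of the Lebesgue measure, followed by taking $m=\|f\|_{\Phi,R}$ (where the infimum is attained) to conclude the norm bound. Your preliminary observation that $M^\Phi\subseteq L^1_{\loc}$, which justifies the pointwise applicability of Jensen's inequality, is a detail the paper leaves implicit but is a welcome addition.
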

\begin{proof}
 First, we show $f*\eta\in M^\Phi$.  Let $f\in M^\Phi$ and $m>0$.  We apply Jensen's inequality 
 on the probability measure $\eta\,\d\lambda$,  Fubini's theorem and the transformation theorem 
 for the Lebesgue measure to estimate
 \begin{align}
  &\int_{\R^d}\Phi\left(\frac{f*\eta}{m}\right)\d\lambda
  =\int_{\R^d}\Phi\left(\int_{\R^d}\frac{f(x-y)}{m}\eta(y)\,\d y\right)\d x \nonumber \\
  &\leq\int_{\R^d}\int_{\R^d}\Phi\left(\frac{f(x-y)}{m}\right)\eta(y)\,\d y\,\d x 
  =\int_{\R^d}\left(\int_{\R^d}\Phi\left(\frac{f(x-y)}{m}\right)\d x\right)\eta(y)\,\d y \nonumber \\
  &=\int_{\R^d}\left(\int_{\R^d}\Phi\left(\frac{f(x)}{m}\right)\d x\right)\eta(y)\,\d y 
  =\int_{\R^d}\Phi\left(\frac{f}{m}\right)\d\lambda<\infty.  \label{eq:molify}
 \end{align}
 
 Second,  it follows from the definition of the modified Luxemburg norm and inequality~\eqref{eq:molify} 
 that $\|f*\eta\|_{\Phi,R}\leq\|f\|_{\Phi,R}$ for all $R\ge 1$.
\end{proof}

We endow $\R^d$ with the Euclidean norm $|\cdot|$ and denote by $B_{\R^d}(r):=\{x\in\R^d\colon |x|\leq r\}$ 
the closed ball around the origin with radius $r\ge 0$.

\begin{lemma} \label{lem:approx}
 For every $R\geq 1$, $r\geq 0$ and $f\in B_R(r)$,  there exists a sequence $(f_n)_{n\in\N}$ in 
 $C_c^\infty(\R^d)\cap B_R(r)$ with $\lim_{n\to\infty}\|f-f_n\|_\Phi=0$.  In particular, 
 $C_c^\infty\subset M^\Phi$ is dense. 
\end{lemma}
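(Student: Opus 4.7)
The plan is to approximate $f\in B_R(r)$ by a three-stage construction — spatial truncation, value truncation, and mollification — and then extract a diagonal sequence. At each stage I would verify both: (a) the approximant stays inside $B_R(r)$, and (b) convergence holds in the Luxemburg norm via criterion~\eqref{eq:conv}.

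\textbf{Step 1 (spatial truncation).} Set $g_n:=f\cdot\one_{B_{\R^d}(n)}$. Since $|g_n|\le|f|$ pointwise and $\Phi$ is even and monotone on $\R_+$, we get $\Phi(g_n/m)\le\Phi(f/m)$ for every $m>0$; integrating with $m=r$ yields $g_n\in B_R(r)$. The difference $f-g_n$ tends to $0$ pointwise and is dominated by $|f|$, so for any $m\in(0,1]$ dominated convergence (with dominator $\Phi(f/m)\in L^1$ because $f\in M^\Phi$) gives $\int_{\R^d}\Phi\bigl((f-g_n)/m\bigr)\,\d\lambda\to 0$, hence $\|f-g_n\|_\Phi\to 0$ by~\eqref{eq:conv}.

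\textbf{Step 2 (value truncation).} For each $n$ set $g_{n,k}:=(-k)\vee(g_n\wedge k)$, which is bounded and compactly supported. Again $|g_{n,k}|\le|g_n|\le|f|$ gives $g_{n,k}\in B_R(r)$, and $g_{n,k}\to g_n$ pointwise as $k\to\infty$, so dominated convergence (dominator $\Phi(g_n/m)$) yields $\|g_n-g_{n,k}\|_\Phi\to 0$.

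\textbf{Step 3 (mollification).} Fix $\eta\in C_c^\infty$ with $\eta\ge 0$ and $\int\eta\,\d\lambda=1$, and let $\eta_\epsilon(x):=\epsilon^{-d}\eta(x/\epsilon)$. Because $g_{n,k}$ is bounded with compact support, $h_{n,k,\epsilon}:=g_{n,k}\ast\eta_\epsilon$ lies in $C_c^\infty$, and Lemma~\ref{lem:molify} gives $h_{n,k,\epsilon}\in B_R(r)$. To show $\|h_{n,k,\epsilon}-g_{n,k}\|_\Phi\to 0$ as $\epsilon\downarrow 0$, note that $|h_{n,k,\epsilon}|\le k$ with supports contained in a common bounded set, that $h_{n,k,\epsilon}\to g_{n,k}$ pointwise a.e.\ by Lebesgue differentiation, and that $\Phi$ is continuous with $\Phi(0)=0$; dominated convergence (with an integrable dominator of the form $\Phi(2k/m)\one_{K}$ for a bounded set $K$) delivers the required convergence of the modular for every $m\in(0,1]$.

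\textbf{Step 4 (diagonalization).} Choose $k_n$ so that $\|g_n-g_{n,k_n}\|_\Phi<1/(2n)$ and then $\epsilon_n$ so that $\|h_{n,k_n,\epsilon_n}-g_{n,k_n}\|_\Phi<1/(2n)$. The sequence $f_n:=h_{n,k_n,\epsilon_n}\in C_c^\infty\cap B_R(r)$ then satisfies $\|f-f_n\|_\Phi\to 0$. The ``in particular'' clause follows from Remark~\ref{rem:union}, since every $f\in M^\Phi$ belongs to some $B_R(1)$ and is thus approximable by $C_c^\infty$-functions.

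The main obstacle is the third step: convergence of mollifications in the Luxemburg norm does not follow from generic $L^p$-type arguments in the absence of a $\Delta_2$-condition on $\Phi$, which is why I first truncate both in space and in value. Once $g_{n,k}$ is bounded with compact support, the integrability required by dominated convergence is automatic, and the rest is bookkeeping.
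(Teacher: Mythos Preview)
Your proof is correct and follows essentially the same strategy as the paper: truncate to a bounded, compactly supported function while staying in $B_R(r)$, then mollify and invoke Lemma~\ref{lem:molify} for membership in $B_R(r)$ and dominated convergence for the modular. The paper merely streamlines your Steps~1--2 into a single truncation $f_k:=f\one_{\{|f|\le k\}\cap B_{\R^d}(k)}$ (citing \cite[Theorem~14 in Chapter~3.4]{Rao} rather than appealing to~\eqref{eq:conv} directly) and leaves the diagonalization implicit, whereas you make it explicit; both are cosmetic differences.
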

\begin{proof}
 Fix $R\geq 1$ and $r\geq 0$.  First,  let $f\in B_R(r)$ be arbitrary.  Then, for every $k\in\N$,  the function 
 $f_k:=f\one_{\{|f|\leq k\}\cap B_{\R^d}(k)}$ is bounded and compactly supported.   Moreover,  we have
 $|f_k|\leq |f|$ for all $k\in\N$ and $f_k\to f$ $\lambda$-a.e.  as $k\to\infty$.  Hence,  
 $(f_k)_{k\in\N}\subset B_R(r)$, and by~\cite[Theorem~14 in Chapter~3.4]{Rao}, we obtain 
 $\|f-f_k\|_\Phi\to 0$.  
 
 Second, let $f\in B_R(r)$ be bounded and compactly supported.  Let $\eta\in C_c^\infty(\R^d)$ 
 be a molifier, i.e.,  $\eta\geq 0$ and $\int_{\R^d}\eta\,\d\lambda=1$.  Define $\eta_n(x):=n^d\eta(nx)$ 
 for all $n\in\N$ and $x\in\R^d$.  Let $f_n:=f*\eta_n\in C_c^\infty(\R^d)$ for all $n\in\N$. By 
 Lemma~\ref{lem:molify}, we have $(f_n)_{n\in\N}\subset B_R(r)$. Since $f$ is bounded and compactly 
 supported, we obtain \eqref{eq:conv} by the dominated convergence theorem, and therefore 
 $\|f-f_n\|_\Phi\to 0$. 
\end{proof}

\section{Semigroups on Orlicz hearts}
\label{sec:theory}

Denote by $\R_+:=\{x\in\R\colon x\geq 0\}$ the positive real numbers including zero. Throughout this 
section,  let $\phi\colon\R_+\to\R_+$ be a convex, strictly increasing, twice continuously differentiable
function which satisfies $\lim_{x\to\infty}\phi(x)=\infty$.  It follows that the inverse function 
$\phi^{-1}\colon [\phi(0),\infty)\to\R_+$ is concave and strictly increasing.  
Furthermore,  $\Phi\colon\R\to\R_+,\; x\mapsto\phi(|x|)-\phi(0)$ is a Young function satisfying $\Phi(x)=0$ 
if and only if $x=0$.  We follow the notations from Section~\ref{sec:Oheart}, and set 
$M^\Phi_+:=\{f\in M^\Phi\colon f\geq 0\}$.

Let $(X_t)_{t\geq 0}$ be a $d$-dimensional L\'evy process on a probability space $(\Omega,\F,\P)$, i.e., 
a stochastic process with stationary and independent increment, taking values in $\R^d$,  and $X_0=0$ 
$\P$-almost surely.  For an overview on L\'evy processes,  we refer to~\cite{applebaum2009} and~\cite{sato1999}.  
We denote the expectation of a random variable $X\colon\Omega\to\R$ by
\[ \E[X]:=\int_\Omega X(\omega)\,\P(d\omega),  \]
whenever the right hand side is well-defined.  From now on, we work under the following assumption.

\begin{assumption} \label{ass:OSG}
 Suppose that $\Phi$ and $X$ satisfy the following properties:
 \begin{enumerate}
 \item For all $t\geq 0$, the distribution $\P\circ X_t^{-1}$ is absolutely continuous w.r.t.  the Lebesgue measure 
  $\lambda$ with essentially bounded Radon-Nikodym derivative.
 \item For every $c\geq 0$, there exists $r\geq 0$ such that 
  $\lim_{t\downarrow 0}\P(|X_t|\geq r)\Phi\big(\frac{c}{t}\big)=0$.
 \end{enumerate}
\end{assumption}

Assumption~\ref{ass:OSG} is illustrated by the following example.

\begin{example}
 Let $(X_t)_{t\geq 0}$ be a $d$-dimensional Brownian motion.  For every $t\geq 0$, let $\n(0,t\one)$ be 
 the $d$-dimensional normal distribution with mean zero and variance $t\one$,  where $\one\in\R^{d\times d}$
 denotes the identity matrix. The distribution $\P\circ X_t^{-1}=\n(0,t\one)$ satisfies Assumption~\ref{ass:OSG}(i) 
 for all $t\geq 0$.  
 Moreover, Assumption~\ref{ass:OSG}(ii) holds e.g.  for  
 \begin{align*}
  \phi(x) :=x^p \;\mbox{ for }\; p\in [1,\infty),  \quad
  \phi(x) :=e^{x},  \quad
  \phi(x) :=(bx-1)e^{bx}+1 \;\mbox{ for }\; b\geq 0. 
 \end{align*}
 The last choice of $\phi$ will be used in Section~\ref{sec:VHJ}.  There,  we also verify Assumption~\ref{ass:OSG}(ii). 
\end{example}

\subsection{Extension of semigroups} 
\label{sec:ext}

Let $C_0:=C_0(\R^d;\R)$ be the space of all continuous functions $f\colon\R^d\to\R$ which vanish at 
infinity, i.e., $\lim_{|x|\to\infty}|f(x)|=0$. As usual, $C_0$ is endowed with the supremum norm 
$\|f\|_\infty:=\sup_{x\in\R^d}|f(x)|$.

From now on, let $(S(t))_{t\geq 0}$ be a strongly continuous,  convex semigroup on $C_0$, i.e., a family of 
convex operators $S(t)\colon C_0\to C_0$ which satisfies
\begin{enumerate}
 \item $S(0)=\id_{C_0}$,
 \item $S(s+t)=S(s)S(t)$ for all $s,t\ge 0$,
 \item $\lim_{t\downarrow 0} \|S(t)f-f\|_\infty=0$ for all $f\in C_0$.
\end{enumerate}
The respective generator $A_\infty\colon D(A_\infty)\to C_0$ is defined as 
\[ A_\infty f :=\lim_{t\downarrow 0}\frac{S(t)f-f}{t},  \]
where the limit is understood w.r.t.~the supremum norm, and the domain $ D(A_\infty)$ is the set of all 
$f\in C_0$ for which this limit exists.  Our goal is to extend the semigroup  $(S(t))_{t\geq 0}$ from $C_0$ 
to the Orlicz heart $M^\Phi$. To that purpose, we assume that 
\begin{equation} \label{eq:ST}
 |S(t)f|\leq T(t)|f| \quad\mbox{for all } t\geq 0 \mbox{ and } f\in C_0\cap M^\Phi, 
\end{equation}
for a dominating family $(T(t))_{t\geq 0}$ of operators $T(t)\colon M^\Phi_+\to M^\Phi_+$. For the 
remainder of the section, we fix $a\geq 0$, and focus on the dominating family of the form
\begin{equation} \label{eq:def OSG}
 \big(T(t)f\big)(x):=\phi^{-1}\Big(\E\big[\phi(e^{at}f(x+X_t))\big]\Big)
\end{equation}
for all $t\geq 0$,  $f\in M^\Phi_+$ and $x\in\R^d$.

\begin{remark}
 For $a=0$,  the family $(T(t))_{t\geq 0}$ itself is a semigroup.  In particular, for the choice $\phi(x)=e^{x}$, 
 we obtain the entropic semigroup 
 \[ (T(t)f)(x)=\log\big(\E[\exp(f(x+X_t))].\big)  \]
 It follows from It\^o's formula that for $f$ sufficiently smooth, the function $u(t,\cdot):=T(t)f$ solves 
 the Cauchy problem $\partial_t u=\frac{1}{2}\Delta u+\frac{1}{2}|\nabla u|^2$ for $t>0$, $u(0,\cdot)=f$.
\end{remark}

The following lemma is the same as~\cite[Proposition~2.44]{Foellmer}. Recall that in decision theory, 
the expression $-\tfrac{u''(x)}{u'(x)}$ is called the Arrow-Pratt measure of absolute risk-aversion
of a utility function $u$.

\begin{lemma} \label{lem:ap}
 Let $u,v\colon\R_+\to\R_+$ be two strictly increasing, twice continuously differentiable functions.  Assume
 \begin{equation} \label{eq:ap}
  \frac{u''(x)}{u'(x)}\leq\frac{v''(x)}{v'(x)} \quad\mbox{for all } x\in\R_+.
 \end{equation}
 Then, for every random variable $X\colon\Omega\to\R_+$, it holds $u^{-1}\big(\E[u(X)]\big)\leq v^{-1}\big(\E[v(X)]\big)$. 
\end{lemma}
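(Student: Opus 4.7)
The plan is to reduce the inequality to Jensen's inequality applied to the composition $w := u \circ v^{-1}$. Since $v$ is strictly increasing and continuously differentiable with $v' > 0$, its inverse $v^{-1}$ is well-defined and twice continuously differentiable on $[v(0),\infty)$, and $u \circ v^{-1}$ is therefore well-defined and twice continuously differentiable on this interval. Because $X \geq 0$, the random variable $v(X)$ takes values in $[v(0),\infty)$, so composing with $w$ is legitimate.

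First I would show that $w$ is concave on $[v(0),\infty)$. Writing $g := v^{-1}$, the chain rule gives $g'(y)=1/v'(g(y))$ and
\[
 g''(y) = -\frac{v''(g(y))}{v'(g(y))^3},
\]
and hence
\[
 w''(y) = u''(g(y)) g'(y)^2 + u'(g(y)) g''(y)
 = \frac{u''(g(y)) v'(g(y)) - u'(g(y)) v''(g(y))}{v'(g(y))^3}.
\]
Since $u',v'>0$, the sign of $w''(y)$ equals the sign of $u''(g(y))/u'(g(y)) - v''(g(y))/v'(g(y))$, which is non-positive by hypothesis~\eqref{eq:ap}. Thus $w$ is concave on $[v(0),\infty)$.

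Next I would apply Jensen's inequality to the concave function $w$ and the random variable $v(X) \geq v(0)$. This gives
\[
 \E\bigl[u(X)\bigr] = \E\bigl[w(v(X))\bigr] \leq w\bigl(\E[v(X)]\bigr) = u\bigl(v^{-1}(\E[v(X)])\bigr),
\]
with the convention that the right-hand side equals $+\infty$ if $\E[v(X)]=\infty$, in which case the claim is trivial. Since $u$ is strictly increasing, applying $u^{-1}$ to both sides yields the desired inequality $u^{-1}(\E[u(X)]) \leq v^{-1}(\E[v(X)])$.

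The only subtle point is the integrability issue: Jensen's inequality for concave functions requires the expectation of $v(X)$ to be well-defined, which is automatic since $v \geq 0$, and the conclusion holds in $[0,\infty]$ without further assumption. So the main content of the proof is the concavity computation, which reduces directly to the Arrow-Pratt hypothesis after the elementary chain rule manipulation above.
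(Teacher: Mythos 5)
Your proof is correct and is essentially the paper's argument in mirror image: the paper shows $F:=v\circ u^{-1}$ is convex and applies Jensen to $F$ and $u(X)$, while you show $w:=u\circ v^{-1}$ is concave and apply Jensen to $w$ and $v(X)$; the chain-rule computation and the reduction to the Arrow--Pratt hypothesis are the same in both versions.
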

\begin{proof}
 Define $F:=v\circ u^{-1}$.  Let $x\in\R_+$ and $y:=u^{-1}(x)$.  It follows from $v'(y)>0$ and inequality~\eqref{eq:ap}
 that 
 \[ F''(x)=\frac{v'(y)}{u'(y)^2}\left(\frac{v''(y)}{v'(y)}-\frac{u''(y)}{u'(y)}\right)\geq 0.  \]
 This shows that $F$ is convex.  Hence, Jensen's inequality implies
 \[ u^{-1}\big(\E[u(X)]\big)=v^{-1}\big(F\big(\E[u(X)]\big)\big)\leq v^{-1}\big(\E[F(u(X))|\big)=v^{-1}\big(\E[v(X)]\big).  
 	\qedhere \]
\end{proof}

\begin{corollary} \label{cor:ap}
 For every random variable $X\colon\Omega\to\R_+$, it holds
 \[ c\phi^{-1}\big(\E[\phi(X)]\big)\leq \phi^{-1}\big(\E[\phi(cX)|\big) \quad\mbox{for all } c\geq 1.  \]
\end{corollary}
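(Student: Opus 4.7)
The natural approach is to apply Lemma~\ref{lem:ap} to the pair $u(x):=\phi(x)$ and $v(x):=\phi(cx)$. Both functions map $\R_+$ into $\R_+$, are strictly increasing, and twice continuously differentiable, so the only hypothesis of Lemma~\ref{lem:ap} that requires checking is the Arrow-Pratt comparison~\eqref{eq:ap}.

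First I would compute the two ratios directly:
\[ \frac{u''(x)}{u'(x)}=\frac{\phi''(x)}{\phi'(x)},\qquad \frac{v''(x)}{v'(x)}=\frac{c\,\phi''(cx)}{\phi'(cx)}. \]
Condition~\eqref{eq:ap} therefore translates into the pointwise inequality
\[ \frac{\phi''(x)}{\phi'(x)}\leq c\,\frac{\phi''(cx)}{\phi'(cx)} \quad\mbox{for all } x\geq 0, \]
which is equivalent to convexity of the auxiliary map $F(z):=\phi(c\phi^{-1}(z))$ on $[\phi(0),\infty)$. Verifying this, exploiting $c\geq 1$ together with the convexity of $\phi$ (so that $\phi'$ is non-decreasing and $\phi''\geq 0$), is the heart of the argument and the step one expects to be the main technical point.

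Once the Arrow-Pratt inequality is in place, Lemma~\ref{lem:ap} delivers $u^{-1}(\E[u(X)])\leq v^{-1}(\E[v(X)])$. The inverse of $v$ is explicit: solving $\phi(cy)=z$ yields $y=c^{-1}\phi^{-1}(z)$, hence $v^{-1}(z)=c^{-1}\phi^{-1}(z)$ on $[\phi(0),\infty)$. Substituting gives
\[ \phi^{-1}\!\big(\E[\phi(X)]\big)\leq c^{-1}\phi^{-1}\!\big(\E[\phi(cX)]\big), \]
and multiplying by $c\geq 1$ produces the desired inequality.
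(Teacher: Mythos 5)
Your route is exactly the paper's: its entire proof of this corollary is the single line ``apply Lemma~\ref{lem:ap} with $u(x):=\phi(x)$ and $v(x):=\phi(cx)$'', and your computations of the two Arrow--Pratt ratios and of $v^{-1}(z)=c^{-1}\phi^{-1}(z)$ are correct. The problem is the one step you defer as ``the main technical point'': the inequality
\[ \frac{\phi''(x)}{\phi'(x)}\leq c\,\frac{\phi''(cx)}{\phi'(cx)}\quad\mbox{for all } x\geq 0 \]
does \emph{not} follow from $c\geq 1$ together with the convexity of $\phi$. Setting $h(x):=x\phi''(x)/\phi'(x)$, the condition for all $c\geq 1$ is equivalent to $h$ being non-decreasing, which is a genuinely additional hypothesis. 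For instance, the convex, strictly increasing, $C^2$ function determined by $\phi''(x)=\max\{\sin(\pi x),0\}$ and $\phi'(0)=1$ satisfies all the standing assumptions of Section~\ref{sec:theory}, yet at $x=1/2$, $c=3$ the left-hand side is positive while the right-hand side vanishes; one can then choose a two-point random variable $X$ (with $cX$ supported where $\phi$ is affine and $X$ where it is strictly convex) for which the corollary itself fails. So the gap cannot be closed from convexity alone.

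To be fair, the paper's own proof silently skips the same verification; what rescues the statement in practice is that it is only ever invoked for $\phi(x)=x^p$, $\phi(x)=e^{x}$ and $\phi(x)=(bx-1)e^{bx}+1$, for which $h(x)$ equals $p-1$, $x$ and $bx+1$ respectively, each non-decreasing, so the Arrow--Pratt comparison holds. A complete argument should either add ``$x\mapsto x\phi''(x)/\phi'(x)$ is non-decreasing'' as a hypothesis or verify the ratio inequality for the concrete $\phi$ at hand, as is done explicitly for other pairs $(u,v)$ in the proof of Theorem~\ref{thm:gexp ext}.
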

\begin{proof}
 Apply Lemma~\ref{lem:ap} with $u(x):=\phi(x)$ and $v(x):=\phi(cx)$ for all $x\in\R_+$. 
\end{proof}

Next, we investigate basic properties of the dominating family $(T(t))_{t\geq 0}$.  Denote by 
$C_c:=C_c(\R^d;\R)$ the set of all continuous functions $f\colon\R^d\to\R$ with compact support, 
and define $C_c^+:=\{f\in C_c\colon f\geq 0\}$.

\begin{theorem} \label{thm:OSG}
 The family $(T(t))_{t\geq 0}$ satisfies the following properties:
 \begin{enumerate}
  \item $T(t)\colon M^\Phi_+\to M^\Phi_+$ for all $t\geq 0$. 
  \item $\|T(t)f\|_{\Phi,R}\leq e^{at}\|f\|_{\Phi,R}$ for all $R\geq 1$ and $f\in M^\Phi_+\cap B_R(e^{-at})$. 
  \item $T(0)=\id_{M^\Phi_+}$ and $T(s)T(t)f\leq T(s+t)f$ for all $s,t\geq 0$ and $f\in M^\Phi_+$.  
  \item For every $f\in C_c^+$ and $m\in (0,1]$, there exists $r\geq 0$ such that 
   \begin{equation} \label{eq:tight}
    \lim_{t\downarrow 0}\int_{B(0,r)^c}\Phi\left(\frac{T(t)f}{mt}\right)\d\lambda=0.  
   \end{equation}
 \end{enumerate}
\end{theorem}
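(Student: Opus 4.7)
The proof has four parts, all built on Corollary~\ref{cor:ap}, which after applying $\phi$ reads $\phi(c\phi^{-1}(\E[\phi(X)])) \leq \E[\phi(cX)]$ for $c \geq 1$ and $X \geq 0$. The common strategy is to convert pointwise identities for $T(t)f$ into integral estimates via Fubini's theorem and translation invariance of $\lambda$. For (i) and (ii), fix $f \in M^\Phi_+$ and first note that $T(t)f$ is well-defined and measurable: the bounded density $p_t$ from Assumption~\ref{ass:OSG}(i) yields
\[ \E[\phi(e^{at}f(x+X_t))] \leq \|p_t\|_\infty \int_{\R^d} \Phi(e^{at}f)\,d\lambda + \phi(0) < \infty \]
uniformly in $x$ (since $f \in M^\Phi$ implies $\int \Phi(e^{at}f)\,d\lambda < \infty$ by taking $m = e^{-at}$ in the definition of $M^\Phi$), and joint measurability gives $T(t)f \in L^0$. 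For $m \in (0,1]$ and $c := 1/m \geq 1$, Corollary~\ref{cor:ap} applied pointwise and subtracting $\phi(0)$ produces $\Phi(T(t)f(x)/m) \leq \E[\Phi(e^{at}f(x+X_t)/m)]$; integrating in $x$, swapping by Fubini, and using translation invariance gives
\[ \int_{\R^d} \Phi\!\left(\tfrac{T(t)f}{m}\right) d\lambda \leq \int_{\R^d} \Phi\!\left(\tfrac{e^{at}f}{m}\right) d\lambda < \infty. \]
The convexity bound $\Phi(y/m) \leq m^{-1}\Phi(y)$ covers $m > 1$, so $T(t)f \in M^\Phi_+$, proving (i). For (ii) I set $m := e^{at}\|f\|_{\Phi,R}$; the hypothesis $f \in B_R(e^{-at})$ forces $m \leq 1$, and the displayed inequality together with the attainment property of $\|\cdot\|_{\Phi,R}$ gives $\int \Phi(T(t)f/m)\,d\lambda \leq R$, whence $\|T(t)f\|_{\Phi,R} \leq e^{at}\|f\|_{\Phi,R}$.

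The identity $T(0) = \id_{M^\Phi_+}$ in (iii) is immediate from $X_0 = 0$. For the inequality $T(s)T(t)f \leq T(s+t)f$, let $\tilde X_t := X_{s+t} - X_s$, which is independent of $X_s$ and has the same law as $X_t$ by stationary and independent increments. Then $T(t)f(x+X_s) = \phi^{-1}(\E[\phi(e^{at}f(x+X_s+\tilde X_t)) \mid X_s])$, and Corollary~\ref{cor:ap} applied inside the conditional expectation with $c = e^{as} \geq 1$ yields
\[ \phi(e^{as}T(t)f(x+X_s)) \leq \E[\phi(e^{a(s+t)}f(x+X_s+\tilde X_t)) \mid X_s]. \]
Taking the outer expectation, applying $\phi^{-1}$, and using $X_s + \tilde X_t \stackrel{d}{=} X_{s+t}$ yields the claim.

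For (iv), fix $f \in C_c^+$ with $\supp f \subseteq B_{\R^d}(r_0)$, set $c := \|f\|_\infty$, and let $m \in (0,1]$. For $t$ small enough that $1/(mt) \geq 1$ and $e^{at} \leq 2$, Corollary~\ref{cor:ap} gives $\Phi(T(t)f(x)/(mt)) \leq \E[\Phi(e^{at}f(x+X_t)/(mt))]$. The support condition implies $f(x+X_t) \leq c\one_{\{X_t \in B(-x,r_0)\}}$, and monotonicity of $\Phi$ on $\R_+$ bounds the right-hand side by $\Phi(2c/(mt))\,\P(X_t \in B(-x,r_0))$. Integrating over $B(0,r)^c$ and applying Fubini, the spatial factor $\int_{B(0,r)^c}\one_{\{X_t \in B(-x,r_0)\}}\,dx$ equals $\lambda(B(-X_t,r_0) \cap B(0,r)^c)$, which vanishes for $|X_t| \leq r - r_0$ and is bounded by $\lambda(B_{\R^d}(r_0))$ otherwise. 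The proof thus reduces to showing
\[ \Phi(2c/(mt))\,\P(|X_t| \geq r - r_0) \to 0 \quad \text{as } t \downarrow 0, \]
which follows from Assumption~\ref{ass:OSG}(ii) applied with constant $2c/m$, producing some $r_1 \geq 0$; one then takes $r := r_0 + r_1$.

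The main difficulty I anticipate is in (iii): the inequality (rather than equality) reflects the fact that $(T(t))_{t\geq 0}$ is a genuine semigroup only when $a = 0$, and one must carefully orchestrate the conditional expectation, the non-linear functions $\phi$ and $\phi^{-1}$, and the factor $e^{as} \geq 1$ demanded by Corollary~\ref{cor:ap}. A secondary subtlety in (iv) is that the compact support of $f$ is used twice: once to bound $f$ by a multiple of an indicator, and once (through the finite radius $r_0$) to make the post-Fubini spatial measure $\lambda(B(-X_t,r_0) \cap B(0,r)^c)$ finite uniformly in $X_t$, which is what allows Assumption~\ref{ass:OSG}(ii) to be invoked.
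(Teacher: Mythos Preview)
Your proof is correct and follows essentially the same route as the paper's: the same key tools (Corollary~\ref{cor:ap}, Fubini, translation invariance of $\lambda$, and Assumption~\ref{ass:OSG}) are deployed in the same way for each of the four parts. The only cosmetic differences are in~(iv), where the paper packages the constants slightly differently---combining $\lambda(B(r_0))$ and $e^a\|f\|_\infty/m$ into a single constant $c$ and using integration radius $2r$ rather than your $r_0+r_1$---and inserts the indicator $\one_{\{|X_t|\geq r\}}$ before Fubini rather than computing $\lambda(B(-X_t,r_0)\cap B(0,r)^c)$ afterward; the underlying estimate is identical.
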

\begin{proof}
 First, we show $T(t)\colon M^\Phi_+\to M^\Phi_+$ for all $t\geq 0$.  Fix $f\in M^\Phi_+$.  
 It follows from the definition of $M^\Phi$, Assumption~\ref{ass:OSG}(i), $\phi=\Phi+\phi(0)$ 
 and $\int_{\R^d}\phi(0)\,\d(\P\circ X_t^{-1})=\phi(0)$ that
 \[ \E\big[\phi(e^{at}f(x+X_t))\big]
 	=\int_{\R^d}\phi\big(e^{at}f(x+y)\big)\big(\P\circ X_t^{-1}\big)(\d y)<\infty
 	\quad\mbox{for all } x\in\R^d.  \]
 Thus, $T(t)f\colon\R^d\to\R$ is a well-defined function.  By Tonelli's theorem, the function 
 $T(t)f$ is measurable.  Let $m>0$.  We distinguish between two cases.  On the one hand, 
 for $m\in (0,1]$, we use Corollary~\ref{cor:ap}, monotonicity of $\phi$,  Fubini's theorem, the 
 transformation theorem for the Lebesgue measure and the definition of $M^\Phi$ to estimate
 \begin{align}
  \int_{\R^d}\Phi\left(\frac{T(t)f}{m}\right)\d\lambda \nonumber
  &=\int_{\R^d}\phi\left(\frac{1}{m}\phi^{-1}\big(\E[\phi(e^{at}f(x+X_t))]\big)\right)-\phi(0)\, \d x \nonumber \\
  &\leq\int_{\R^d}\E\left[\phi\left(\frac{e^{at}f(x+X_t)}{m}\right)-\phi(0)\right]\d x \nonumber \\
  &=\E\left[\int_{\R^d}\Phi\left(\frac{e^{at}f(x+X_t)}{m}\right)\d x\right]
  =\int_{\R^d}\Phi\left(\frac{e^{at}f(x)}{m}\right)\d x<\infty.  \label{eq:T norm}
 \end{align}
On the other hand, for $m\geq 1$, it follows from monotonicity of $\Phi$ that
 \[ \int_{\R^d}\Phi\left(\frac{T(t)f}{m}\right)\d\lambda\leq\int_{\R^d}\Phi(T(t)f)\,\d\lambda
 	=\int_{\R^d}\Phi(e^{at}f)\,\d\lambda<\infty.  \]
 
 Second,  let $R\geq 1$ and $f\in M^\Phi_+\cap B_R(e^{-at})$.  If $f=0$,  then $T(t)f=0$.  Thus,
 assume $\|f\|_{\Phi,R}>0$.  For $m:=e^{at}\|f\|_{\Phi,R}\in (0,1]$, it follows from 
 inequality~\eqref{eq:T norm} that
 \[ \int_{\R^d}\Phi\left(\frac{T(t)f}{m}\right)\d\lambda
 	\leq\int_{\R^d}\Phi\left(\frac{f}{\|f\|_{\Phi,R}}\right)\d\lambda\leq R.  \]
 This shows $\|T(t)f\|_{\Phi,R}\leq e^{at}\|f\|_{\Phi,R}$.  
 
 Third, let $f\in M^\Phi_+$, $x\in\R^d$, and $s,t\geq 0$.  For all $t\geq 0$, let $\F_t:=\sigma(X_s\colon [0,t])$ 
 be the $\sigma$-Algebra generated by L\'evy process up time $t$.  Corollary~\ref{cor:ap},  the stationary
 and independent increments of $(X_t)_{t\geq 0}$, monotonicity of $\phi^{-1}$ and the tower property yield
 \begin{align*}
  \big(T(s)T(t)f\big)(x)
  &=\phi^{-1}\Big(\E\big[\phi\big(e^{as}(T(t)f)(x+X_s)\big)\big]\Big) \\
  &=\phi^{-1}\left(\E\left[\phi\Big(e^{as}\phi^{-1}\Big(\E\big[\phi\big(e^{at}f(y+ X_t)\big)\big]\Big)\Big)
  	\Big|_{y=x+X_s}\right]\right)\\
  &\leq\phi^{-1}\Big(\E\Big[\E\big[\phi\big(e^{a(s+t)}f(x+X_s+(X_{s+t}-X_s))\big)\big|\F_s\big]\Big]\Big) \\
  &=\big(T(s+t)f\big)(x).
 \end{align*}  
 Moreover, $X_0=0$ $\P$-a.s. implies $T(0)=\id_{M^\Phi_+}$.
 
 Fourth, let $f\in C_c^+$ and $m\in (0,1]$.  Choose $r_0\geq 0$ with $\supp(f)\subset B(r_0):=B_{\R^d}(r_0)$. 
 Define
 \begin{equation} \label{eq:OSG c}
  c:=\max\left\{\lambda\big(B(r_0)\big),\frac{e^a\|f\|_\infty}{m}\right\}.  
 \end{equation}
 By Assumption~\ref{ass:OSG}(ii), there exists $r\geq \max\{c,r_0\}$ such that
 \begin{equation} \label{eq:OSG lim}
  \lim_{t\downarrow 0}\P(|X_t|\geq r)\Phi\left(\frac{c}{t}\right)=0. 
 \end{equation}
 Let $t\in (0,1]$.  We use inequality~\eqref{eq:T norm}, $\supp(f)\subset B(r_0)\subset B(r)$,  $\Phi(0)=0$,  
 Fubini's theorem, $\Phi\geq 0$, the transformation theorem for the Lebesgue measure,  equation~\eqref{eq:OSG c}
 and equation~\eqref{eq:OSG lim} to estimate
 \begin{align*}
  &\int_{B(2r)^c}\Phi\left(\frac{T(t)f}{mt}\right)\d\lambda
  \leq\int_{B(2r)^c}\E\left[\Phi\left(\frac{e^{at}f(x+X_t)}{mt}\right)\right]\d x \\
  &= \int_{B(2r)^c}\E\left[\Phi\left(\frac{e^{at}f(x+X_t)}{mt}\right)\one_{\{|X_t|\geq r\}}\right] \d x  \\
  &=\E\left[\left(\int_{B(2r)^c}\Phi\left(\frac{e^{at}f(x+X_t)}{mt}\right)\d x\right)\one_{\{|X_t|\geq r\}}\right] \\
  &\leq\E\left[\left(\int_{\R^d}\Phi\left(\frac{e^{at}f(x+X_t)}{mt}\right)\,dx\right)\one_{\{|X_t|\geq r\}}\right] 
  =\E\left[\left(\int_{\R^d}\Phi\left(\frac{e^{at}f(x)}{mt}\right)\d x\right)\one_{\{|X_t|\geq r\}}\right] \\
  &=\P(|X_t|\geq r)\int_{\R^d}\Phi\left(\frac{e^{at}f}{mt}\right)\d\lambda 
  =\P(|X_t|\geq r)\int_{B(r_0)}\Phi\left(\frac{e^{at}f}{mt}\right)\d\lambda \\
  &\leq c\P(|X_t|\geq r)\Phi\left(\frac{c}{t}\right) \to 0 \quad\mbox{as } t\downarrow 0. \qedhere
 \end{align*}
\end{proof}

Now, we are ready to state our main extension result.

\begin{theorem} \label{thm:ext}
 Suppose that Assumption~\ref{ass:OSG} holds.  Let $(S(t))_{t\geq 0}$ be a strongly continuous, 
 convex semigroup on $C_0$ which satisfies inequality~\eqref{eq:ST} with the dominating family 
 $(T(t))_{t\geq 0}$ given by equation~\eqref{eq:def OSG}. Then,  the following statements hold:
 \begin{enumerate}
  \item $S(t)\colon C_0\cap M^\Phi\to C_0\cap M^\Phi$ for all $t\geq 0$. 
  \item $\|S(t)f\|_{\Phi,R}\leq e^{at}\|f\|_{\Phi,R}$ for all $t\geq 0$, $R\geq 1$ and $f\in C_0\cap B_R(e^{-at})$.
  \item $\|S(t)f-S(t)g\|_{\Phi,R}\leq 4e^{at}\|f-g\|_{\Phi,R}$ for all $t\geq 0$, $R\geq 1$,\,
   $f,g\in C_0\cap B_R\big(e^{-at}/3\big)$. 
  \item For every $f\in M^\Phi$ and $t_0\geq 0$, there exist $R\geq 1$ and $r>0$ such that
   \begin{equation} \label{eq:ext lip}
    \sup_{t\in [0,t_0]}\|S(t)g-S(t)h\|_\Phi\leq 4Re^{at}\|g-h\|_\Phi 
    \mbox{ for all } g,h\in C_0\cap B_\Phi(f,r). 
   \end{equation}
 \end{enumerate}
 Hence, for every $t\geq 0$, the operator $S(t)$ has a unique continuous extension 
  \[ \tilde{S}(t)\colon M^\Phi\to M^\Phi \]
 satisfying the properties (ii)-(iv) without the restriction to functions in $C_0$. Furthermore,
 $(\tilde{S}(t))_{t\geq 0}$ is a strongly continuous,  convex semigroup on $M^\Phi$.  For all 
 $f\in D(A_\infty)\cap C_c$ with $A_\infty f\in C_c$,  it holds
 \[ \lim_{t\downarrow 0}\left\|\frac{S(t)f-f}{t}-A_\infty f\right\|_\Phi=0. \]
\end{theorem}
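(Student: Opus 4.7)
The plan is to derive (i)--(iv) by pushing the pointwise domination $|S(t)f|\le T(t)|f|$ through the norm estimates already obtained for the dominating family in Theorem~\ref{thm:OSG}. Property (i) is essentially automatic: $T(t)|f|\in M^\Phi_+$ forces $|S(t)f|\in M^\Phi_+$, hence $S(t)f\in C_0\cap M^\Phi$. Property (ii) follows from the monotonicity of $\|\cdot\|_{\Phi,R}$ on the Banach lattice $M^\Phi$ together with Theorem~\ref{thm:OSG}(ii). The core difficulty is the Lipschitz estimate (iii). I would first observe that $|S(t)0|\le T(t)0=0$ forces $S(t)0=0$, so for $\alpha\in(0,1]$ the decomposition $f=(1-\alpha)g+\alpha\bigl(g+(f-g)/\alpha\bigr)$ and convexity give
\[
S(t)f - S(t)g \;\le\; \alpha\bigl[\,|S(t)(g+(f-g)/\alpha)| + |S(t)g|\,\bigr]
\]
pointwise, and symmetrically $S(t)g-S(t)f \le \alpha[\,|S(t)(f+(g-f)/\alpha)|+|S(t)f|\,]$. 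Choosing $\alpha:=\tfrac{3e^{at}}{2}\|f-g\|_{\Phi,R}$ (the nontrivial case), the hypothesis $\|f\|_{\Phi,R},\|g\|_{\Phi,R}\le e^{-at}/3$ guarantees $\alpha\le 1$ and that the four auxiliary arguments all lie in $B_R(e^{-at})$, so (ii) applies to each. Splitting $|S(t)f-S(t)g|=(S(t)f-S(t)g)_++(S(t)g-S(t)f)_+$ and taking $\|\cdot\|_{\Phi,R}$ yields a bound $2\alpha e^{at}(\|f\|_{\Phi,R}+\|g\|_{\Phi,R})+2e^{at}\|f-g\|_{\Phi,R}$, in which the first summand equals $2e^{at}\|f-g\|_{\Phi,R}$ by the choice of $\alpha$, producing the constant~$4$.

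For (iv), fix $f\in M^\Phi$ and $t_0\ge 0$; by Remark~\ref{rem:union} pick $R\ge 1$ with $f\in B_R(e^{-at_0}/6)$ and set $r:=e^{-at_0}/6$. Since $\|\cdot\|_{\Phi,R}\le\|\cdot\|_\Phi$ by Lemma~\ref{lem:norm}, any $g,h\in B_\Phi(f,r)$ satisfy $\|g\|_{\Phi,R},\|h\|_{\Phi,R}\le e^{-at_0}/3\le e^{-at}/3$ for $t\le t_0$; applying (iii) and then $\|\cdot\|_\Phi\le R\|\cdot\|_{\Phi,R}$ yields the claim. Now (iv) says $S(t)$ is uniformly locally Lipschitz in $t\in[0,t_0]$ on the $\|\cdot\|_\Phi$-dense set $C_0\cap M^\Phi$ (density from Lemma~\ref{lem:approx}), so standard extension by continuity defines $\tilde{S}(t)\colon M^\Phi\to M^\Phi$. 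Properties (ii)--(iv), convexity, and the semigroup identity all pass to $\tilde{S}(t)$ by continuity together with the closedness of $\le$ under norm limits in the Banach lattice.

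The remaining items -- strong continuity of $(\tilde S(t))_{t\ge 0}$ and the generator-type convergence -- will both be handled by a split-support argument. For $f\in C_c^\infty$ with $\supp f\subset B_{\R^d}(r_0)$ and $m\in(0,1]$, choose $R\ge r_0$ via Theorem~\ref{thm:OSG}(iv) applied to $|f|$ and split $\int_{\R^d}\Phi\bigl((S(t)f-f)/m\bigr)\,\d\lambda$ over $B_{\R^d}(R)$ and its complement. On $B_{\R^d}(R)$, the strong continuity $\|S(t)f-f\|_\infty\to 0$ on $C_0$ together with finite Lebesgue measure sends the integral to zero. On $B_{\R^d}(R)^c$, $f$ vanishes and $|S(t)f|\le T(t)|f|$, so for $t\le 1$ one uses monotonicity of $\Phi$ to write $\Phi(T(t)|f|/m)\le\Phi(T(t)|f|/(mt))$ and invokes Theorem~\ref{thm:OSG}(iv); equation~\eqref{eq:conv} then gives $\|S(t)f-f\|_\Phi\to 0$. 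Strong continuity on all of $M^\Phi$ then follows from (iv) and a standard triangle-inequality argument against $C_c^\infty$-approximants. The final convergence for $f\in D(A_\infty)\cap C_c$ with $A_\infty f\in C_c$ is entirely parallel: outside a large ball, both $f$ and $A_\infty f$ vanish and $(S(t)f-f)/t-A_\infty f$ reduces to $S(t)f/t$, dominated in the tail by $T(t)|f|/t$ and controlled by Theorem~\ref{thm:OSG}(iv); inside the ball, the uniform convergence $\|(S(t)f-f)/t-A_\infty f\|_\infty\to 0$ and finite Lebesgue measure close the estimate. I expect the main technical obstacle to be the Lipschitz bound (iii); the exact choice of $\alpha$ and the resulting scale balancing are what force the factor $1/3$ in the hypothesis.
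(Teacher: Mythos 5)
Your proposal is correct and follows the same overall architecture as the paper: (i) and (ii) by pushing the domination $|S(t)f|\le T(t)|f|$ through Theorem~\ref{thm:OSG}, (iv) by combining (iii) with Lemma~\ref{lem:norm} and Remark~\ref{rem:union}, extension by density and local Lipschitz continuity, and strong continuity plus the generator limit via the same split of the integral into a large ball (where uniform convergence and finite measure apply) and its complement (where the tail estimate \eqref{eq:tight} for $T(t)|f|/(mt)$ takes over). The one place where you genuinely diverge is (iii): the paper introduces the shifted operator $S_f(t)h:=S(t)(f+h)-S(t)f$, notes it is convex with $S_f(t)0=0$ and bounded by $4/3$ on $C_0\cap B_R(0,2e^{-at}/3)$ via (ii), and then invokes \cite[Lemma~A.1]{DKN19} to convert boundedness into the Lipschitz bound $4e^{at}$. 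You instead reprove that implication from scratch with the decomposition $f=(1-\alpha)g+\alpha(g+(f-g)/\alpha)$ and the calibrated choice $\alpha=\tfrac{3e^{at}}{2}\|f-g\|_{\Phi,R}$; your bookkeeping is correct (note only that the first summand is $\le 2e^{at}\|f-g\|_{\Phi,R}$ rather than equal to it, since $\|f\|_{\Phi,R}+\|g\|_{\Phi,R}\le 2e^{-at}/3$ need not be an equality, and that the mention of $S(t)0=0$ is not actually used in your decomposition). Your version is self-contained and makes the origin of the constant $4$ and the radius $e^{-at}/3$ transparent, at the cost of redoing a general lemma; the paper's version is shorter but opaque without the reference. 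The remaining small gap is that "the semigroup identity passes by continuity" deserves the one extra line the paper supplies, namely applying the local Lipschitz estimate of $\tilde S(s)$ on a ball around $\tilde S(t)f$ to the approximants $S(t)f_n\in C_0\cap M^\Phi$; as stated this is a routine completion, not a flaw.
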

\begin{proof}
 First,  we show $S(t)f\colon C_0\cap M^\Phi\to C_0\cap M^\Phi$.  Fix $t\geq 0$ and 
 $f\in C_0\cap M^\Phi$. From inequality~\eqref{eq:ST} and Theorem~\ref{thm:OSG}(i),  we obtain
 \[ \int_{\R^d}\Phi\left(\frac{S(t)f}{m}\right)\d\lambda
 	\leq\int_{\R^d}\Phi\left(\frac{T(t)|f|}{m}\right)\d\lambda<\infty \quad\mbox{for all } m>0.  \]
   
 Second, let $t\geq 0$, $R\geq 1$ and $f\in C_0\cap B_R(e^{-at})$.  Inequality~\eqref{eq:ST} and 
 Theorem~\ref{thm:OSG}(ii) imply $\|S(t)f\|_{\Phi,R}\leq\|T(t)(|f|)\|_{\Phi,R}\leq e^{at}\|f\|_{\Phi,R}$.  
 
 Third,  let $t\geq 0$, $R\geq 1$ and $f,g\in C_0\cap B_R\big(e^{-at}/3\big)$.  Define
 \[ S_f(t)\colon C_0\to C_0,\; h\mapsto S(t)(f+h)-S(t)f.  \]
 The operator $S_f(t)$ is convex and $S_f(t)0=0$.  Moreover, by~(ii),  it holds
 \[ \|S_f(t)h\|_{\Phi,R}\leq\tfrac{4}{3} 
 	\quad\mbox{for all } h\in C_0\cap B_R\left(0,\tfrac{2e^{-at}}{3}\right).  \]
 Hence,~\cite[Lemma~A.1]{DKN19} implies
 \[ \|S_f(t)h\|_{\Phi,R}\leq 4e^{at}\|h\|_{\Phi,R} 
 	\quad\mbox{for all } h\in C_0\cap B_R\left(0,\tfrac{2e^{-at}}{3}\right).  \]
 Applying the previous inequality for $h:=g-f$, we obtain
 \[ \|S(t)f-S(t)g\|_{\Phi,R}=\|S_f(t)(g-f)\|_{\Phi,R}\leq 4e^{at}\|f-g\|_{\Phi,R}.  \]
 
 Forth,  let $f\in M^\Phi$ and $t_0\geq 0$.  By Lemma~\ref{lem:norm} and Remark~\ref{rem:union}, 
 there exists $R\geq 1$ and $r>0$ with $B_\Phi(f,r)\subset B_R(f,r)\subset B_R\big(e^{-at_0}/3\big)$.  
 Hence,~(iv) follows from~(iii) and Lemma~\ref{lem:norm}. 
 
 Fifth,  we extend $(S(t))_{t\geq 0}$.  To that end,  let $t\geq 0$ and $f\in M^\Phi$.  Choose $R\geq 1$ 
 and $r>0$ such that inequality~\eqref{eq:ext lip} holds for $t_0=t$.  By Lemma~\ref{lem:approx},  there 
 exists a sequence $(f_n)_{n\in\N}$ in $C_0\cap B_\Phi(f,r)$ such that $\|f-f_n\|_\Phi\to 0$ as $n\to\infty$. 
 Define $\tilde{S}(t)f:=\lim_{n\to\infty}S(t)f_n$.  By inequality~\eqref{eq:ext lip}, the limit exists and does
 not depend on the choice of the  sequence $(f_n)_{n\in\N}$.  Moreover, $\tilde{S}(t)$ is the unique 
 continuous extension of $S(t)$.  It is straightforward to verify that properties~(ii)-(iv) and convexity 
 are preserved in the limit. To show the semigroup property,  let $s,t\geq 0$ and $f\in M^\Phi$.  Choose 
 a sequence $(f_n)_{n\in\N}$ in $C_0\cap M^\Phi$ with $\|f-f_n\|_\Phi\to 0$ as $n\to\infty$.  By
 inequality~\eqref{eq:ext lip},  there exist $R\geq 1$ and $r>0$ such that 
 \[ \big\|\tilde{S}(s)g-\tilde{S}(s)h\big\|_\Phi\leq 4Re^{as}\|g-h\|_\Phi 
 	\quad\mbox{for all } g,h\in B_\Phi\big(\tilde{S}(t)f,r\big).  \]
 Since $S(t)f_n\to\tilde{S}(t)f$ as $n\to\infty$,  we can assume that $S(t)f_n\in B_\Phi(\tilde{S}(t)f,r)$
 for all $n\in\N$.  Then, the semigroup property of $(S(t))_{t\geq 0}$ on $C_0$ implies
 \begin{align*}
  \big\|\tilde{S}(s)\tilde{S}(t)f-\tilde{S}(s+t)f\big\|_\Phi
  &\leq\big\|\tilde{S}(s)\tilde{S}(t)f-S(s)S(t)f_n\big\|_\Phi+\big\|S(s+t)f_n-\tilde{S}(s+t)f\big\|_\Phi \\
  &\leq 4Re^{as}\big\|\tilde{S}(t)f-S(t)f_n\big\|_\Phi+\big\|S(s+t)f_n-\tilde{S}(s+t)f\big\|_\Phi \\
  &\to 0 \quad\mbox{as } n\to\infty.  
 \end{align*}
 
 Sixth, we show that $(\tilde{S}(t))_{t\geq 0}$ is strongly continuous.  Let $f\in C_c$.  In order to prove
 $\lim_{t\downarrow 0}\|S(t)f-f\|_\Phi=0$,  it suffices to verify equation~\eqref{eq:conv}.  Fix $m\in (0,1]$.
 Choose $r\geq 0$ with $\supp(f)\subset B(r)$ such that inequality~\eqref{eq:tight} holds for $|f|$.  
 We have
 \[ \int_{\R^d}\Phi\left(\frac{S(t)f-f}{m}\right)\d\lambda
 	=\int_{B(r)}\Phi\left(\frac{S(t)f-f}{m}\right)\d\lambda +\int_{B(r)^c}\Phi\left(\frac{S(t)f}{m}\right)\d\lambda.  \]
 On the one hand, since $(S(t))_{t\geq 0}$ is strongly continuous on $C_0$ w.r.t.~the supremum norm, 
 we obtain
 \[ \lim_{t\downarrow 0}\int_{B(r)}\Phi\left(\frac{S(t)f-f}{m}\right)\d\lambda=0.  \]
 On the other hand,  inequality~\eqref{eq:ST} and inequality~\eqref{eq:tight} imply
 \[ \int_{B(r)^c}\Phi\left(\frac{S(t)f}{m}\right)\d\lambda \leq\int_{B(r)^c}\Phi\left(\frac{T(t)|f|}{mt}\right)\d\lambda 
 \to 0 \quad\mbox{as } t\downarrow 0.  \]
 We obtain $\lim_{t\downarrow 0}\|S(t)f-f\|_\Phi=0$ for all $f\in C_c$.  For arbitrary $f\in M^\Phi$, the claim
 follows by approximation from Lemma~\ref{lem:approx} and inequality~\eqref{eq:ext lip}.  
 
 Seventh,  let $f\in D(A_\infty)\cap C_c$ with $A_\infty f\in C_c$.  To prove the desired convergence, 
 we verify inequality~\eqref{eq:conv}.  Fix $m\in (0,1]$.  Choose $r\geq 0$ with $\supp(f),\,\supp(Af)\subset B(r)$
 such that equation~\eqref{eq:tight} holds for $|f|$. We have
 \begin{align*}
  &\int_{\R^d}\Phi\left(\frac{1}{m}\left(\frac{S(t)f-f}{t}\right)\right)\d\lambda \\
  &=\int_{B(r)}\Phi\left(\frac{1}{m}\left(\frac{S(t)f-f}{t}-Af\right)\right)\d\lambda
  	+\int_{B(r)^c}\Phi\left(\frac{S(t)f}{mt}\right)\d\lambda.
 \end{align*}
 On the one hand, since $f\in D(A_\infty)$,  we obtain
 \[ \lim_{t\downarrow 0}\int_{B(r)}\Phi\left(\frac{1}{m}\left(\frac{S(t)f-f}{t}-Af\right)\right)\d\lambda=0.  \]
 On the other hand, inequality~\eqref{eq:ST} and equation~\eqref{eq:tight} imply
 \[ \int_{B(r)^c}\Phi\left(\frac{S(t)f}{mt}\right)\d\lambda
 	\leq\int_{B(r)^c}\left(\frac{T(t)|f|}{mt}\right)\d\lambda \to 0 \quad\mbox{as } t\downarrow 0.  \qedhere \]
\end{proof}

\subsection{Convex semigroups on $M^\Phi$}

In this section, we show that the results from~\cite[Section~3]{DKN19} are applicable in our setting.
For notational simplification we write $(S(t))_{t\geq 0}$ for the extension $(\tilde{S}(t))_{t\geq 0}$. 
Recall that $(S(t))_{t\geq 0}$ is a strongly continuous, convex semigroup on $M^\Phi$ which is locally 
uniformly Lipschitz continuous, i.e., for every $f\in M^\Phi$ and $t_0\geq 0$, there exist $c\geq 0$ and 
$r>0$ such that 
\begin{equation} \label{eq:uniform lip}
 \sup_{t\in [0,t_0]}\|S(t)g-S(t)h\|_\Phi\leq c\|g-h\|_\Phi \quad\mbox{for all } g,h\in B_\Phi(f,r).  
\end{equation}
The respective generator $A\colon D(A)\to M^\Phi$  is defined as
\[ A f :=\lim_{t\downarrow 0}\frac{S(t)f-f}{t},\]
where the limit is understood w.r.t.~the Luxembourg norm, and the domain $ D(A)$ is the set of all 
$f\in M^\Phi$ for which this limit exists. It follows from~\cite[Theorem~14 in Chapter~3.4]{Rao}, that 
the Luxembourg norm is order continuous, i.e., for every net $(f_\alpha)_\alpha\subset M^\Phi$ with 
$f_\alpha\downarrow 0$,  we have $\|f_\alpha\|_\Phi\to 0$. In particular, $M^\Phi$ is Dedekind $\sigma$-
complete, see~\cite[Theorem~2.4.2]{meyer1991}. Hence, we can  apply the results 
from~\cite[Section~3]{DKN19}. There, the results are formulated for strongly continuous,  convex 
semigroups of bounded operators.  As a consequence of the convexity and the uniform boundedness 
principle, it follows from~\cite[Proposition~2.2 and Corollary~2.4]{DKN19} that these semigroups
are locally uniformly Lipschitz continuous. This is the crucial property (rather than boundedness) 
for the results in~\cite[Section~3]{DKN19}. A direct adaptation to the present setting is summarized 
in the following result.

\begin{theorem} \label{thm:PDE}
 The following statements hold:
 \begin{enumerate}
  \item $S(t)\colon D(A)\to D(A)$ for all $t\geq 0$. 
  \item Let $f\in D(A)$. Then, the mapping $S(\cdot)f\colon [0,\infty)\to M^\Phi$ is continuously
   differentiable. For every $t\geq 0$, the derivative is given by
   \[ \frac{d}{dt}\big(S(t)f\big)=\inf_{h>0}\frac{S(t)(f+hAf)-S(t)f}{h}
   	=\sup_{h<0}\frac{S(t)(f+hAf)-S(t)f}{h}.  \]
  \item The operator $A$ is closed, i.e.,  for every sequence $(f_n)_{n\in\N}\subset D(A)$
   with $f_n\to f$ and $A f_n\to g$ for some $f,g\in M^\Phi$, it holds $f\in D(A)$ with $A f=g$.  
  \item For every continuous function $v\colon[0,\infty)\to M^\Phi$ which satisfies
   \[ v(t)\in D(A)\quad\mbox{with}\quad\lim_{h\downarrow 0}\frac{v(t+h)-v(t)}{h}=Av(t) 
   	\quad\mbox{for all }t\geq 0, \]
  it holds $v(t)=S(t)f$ for $f:=v(0)$.
 \end{enumerate}
 In particular,  for every $f\in D(A)$,  the abstract Cauchy problem
 \begin{equation} \label{eq:CP2}
  \partial_t u(t)=Au(t) \;\mbox{ for all } t\geq 0,  \quad u(0)=f
 \end{equation}
 has a unique solution $u\in C^1([0,\infty);M^\Phi)\cap C([0,\infty);D(A))$. The solution is given 
 by $u(t):=S(t)f$ for all $t\geq 0$ and depends continuously on the initial data.  
  
\end{theorem}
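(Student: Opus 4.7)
The plan is to reduce each assertion to the corresponding result in~\cite[Section~3]{DKN19}. The paragraph preceding the theorem has already verified the two structural hypotheses needed: $M^\Phi$ is a Dedekind $\sigma$-complete Banach lattice with order continuous Luxemburg norm, and $(S(t))_{t\geq 0}$ is locally uniformly Lipschitz continuous in the sense of~\eqref{eq:uniform lip}, which (as explained there) suffices in place of the formal boundedness assumption of~\cite{DKN19}.

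For~(i) and~(ii), the engine is convexity of $S(t)$: for fixed $f\in D(A)$ the map $h\mapsto\frac{1}{h}\big(S(t)(f+hAf)-S(t)f\big)$ is monotone nondecreasing in $h\in\R\setminus\{0\}$. The local Lipschitz estimate~\eqref{eq:uniform lip}, combined with $S(h)f=f+hAf+o(h)$ in norm (the defining relation of $Af$), yields $\|S(t)S(h)f-S(t)(f+hAf)\|_\Phi=o(h)$, so that $\frac{1}{h}\big(S(t+h)f-S(t)f\big)$ exhibits the same monotone behaviour up to a norm-negligible error. The order continuity of the Luxemburg norm converts the resulting monotone order convergence into norm convergence, giving both the invariance of $D(A)$ under $S(t)$ and the variational identities in~(ii); continuous differentiability of $t\mapsto S(t)f$ then follows from the strong continuity of $(S(t))_{t\geq 0}$.

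For~(iii) and~(iv), the arguments are those of~\cite[Section~3]{DKN19}. Closedness of $A$ is obtained by combining the variational formula from~(ii) with~\eqref{eq:uniform lip} to pass to the limit in the difference quotients: given $f_n\to f$ and $Af_n\to g$, the limit extraction yields $f\in D(A)$ with $Af=g$. Uniqueness in~(iv) comes from a Gronwall-type estimate on $\|v(s)-S(s)f\|_\Phi$, whose right derivative is computed from~(ii) and controlled by~\eqref{eq:uniform lip}, forcing the function to vanish identically on $[0,\infty)$.

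The main subtlety is exactly the one already handled by the remarks preceding the theorem, namely showing that local uniform Lipschitz continuity is an adequate substitute for the boundedness hypothesis of~\cite{DKN19}. With~(i)--(iv) established, the final claim about the Cauchy problem~\eqref{eq:CP2} is immediate: existence via $u(t):=S(t)f$ together with~(i) and~(ii), uniqueness via~(iv), and continuous dependence on initial data from~\eqref{eq:uniform lip}.
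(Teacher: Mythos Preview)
Your proposal is correct and matches the paper's approach exactly: the paper gives no standalone proof but simply states that the theorem is ``a direct adaptation'' of~\cite[Section~3]{DKN19}, with the preceding paragraph supplying the needed justification that local uniform Lipschitz continuity replaces the boundedness hypothesis. You have in fact sketched more of the underlying mechanism (monotone difference quotients, order continuity, Gronwall) than the paper does, but the route is the same.
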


\section{Viscous Hamilton-Jacobi equation}
\label{sec:VHJ}

In this section, we apply the results of Section~\ref{sec:theory} to the viscous Hamilton-Jacobi 
equation
\begin{equation} \label{eq:CP3}
 \begin{cases}
  \partial_t u(t,x)=\frac{1}{2}\Delta u(t,x)+H(\nabla u(t,x)),  & (t,x)\in (0,\infty)\times\R^d,  \\
  u(0,x)=f(x),  & x\in\R^d.
 \end{cases}
\end{equation}

Throughout this section, we make the following assumption.

\begin{assumption} \label{ass:H}
 The function $H\colon \R^d\to\R$ is convex and there exists $K\geq 0$ with
 \[ |H(x)|\leq K(|x|+|x|^2) \quad\mbox{for all }x\in\R^d.  \]
 Furthermore, there exists $r>0$ such that the convex conjugate 
 \[ L\colon\R^d\to [0,\infty],\; \lambda\mapsto\sup_{x\in\R^d}\big(\langle\lambda,x\rangle-H(x)\big) \]
 satisfies $\sup_{\lambda\in B(r)}L(\lambda)<\infty$.
\end{assumption}

\subsection{Construction of the semigroup}
\label{sec:sg}

Let $(W_t)_{t\geq 0}$ be a $d$-dimensional Brownian motion on a probability space $(\Omega,\F,\P)$. 
For every $t\geq 0$, $f\in C_0$ and $x\in\R^d$,  we define 
\[ \big(I(t)f\big)(x):=\sup_{\lambda\in\R^d}\big(\E[f(x+W_t+\lambda t)]-L(\lambda)t\big).  \]
It follows from Fenchel-Moreaus's theorem that 
$H(x)=\sup_{\lambda\in\R^d}\big(\langle x,\lambda\rangle-L(\lambda)\big)$ 
for all $x\in\R^d$.  Hence,  the family $(I(t))_{t\geq 0}$ has the desired derivative at $t=0$,  i.e.,
\[ \lim_{t\downarrow 0}\frac{I(t)f-f}{t}=\frac{1}{2}\Delta f+H(\nabla f)
	 \quad\mbox{for all } f\in C^2_0,  \]
where $C_0^2$ denotes the set of all twice differentiable functions $f\colon\R^d\to\R$ such that all 
partial derivatives up to order two are again elements of $C_0$.  To construct a semigroup 
$(S(t))_{t\geq 0}$ associated to the family $(I(t))_{t\geq 0}$,  we proceed as explained in the 
introduction.  Denote by $\T:=\{k2^{-n}\colon k,n\in\N_0\}$ the dyadic numbers.  For every $n\in\N$ 
and $t\in\T$,  define the partition $\pi_n^t:=\{k2^{-n}\colon k=0,\ldots 2^nt\}$ and the iterated 
operator $I(\pi_n^t):=I(2^{-n})^{2^nt}$. The following result is a direct application 
of~\cite[Theorem~6.2 and Theorem~6.3]{BK20}.

\begin{theorem} \label{thm:gexp C_0}
 There exists a family $(S(t))_{t\geq 0}$ of operators $S(t)\colon C_0\to C_0$ which satisfy the 
 following properties:
 \begin{enumerate}
  \item There exists a subsequence $(n_l)_{l\in\N}\subset\N$ such that 
   \[ \lim_{l\to\infty}\|S(t)f-I(\pi^t_{n_l})f\|_\infty=0 \quad\mbox{for all } t\in\T \mbox{ and } f\in C_0.  \]
  \item $S(0)=\id_{C_0}$ and $S(s)S(t)=S(s+t)$ for all $s,t\geq 0$.
  \item $S(t)$ is convex, monotone and $S(t)0=0$ for all $t\geq 0$. 
  \item $\|S(t)f-S(t)g\|_\infty\leq\|f-g\|_\infty$ for all $t\geq 0$ and $f,g\in C_0$.
  \item For every $f\in C_0$, the mapping $\R_+\to C_0,\; t\mapsto S(t)f$ is continuous. 
  \item For every $f\in C_0^2$,
   \[ \lim_{t\downarrow 0}\left\|\frac{S(t)f-f}{t}-A_\infty f\right\|_\infty =0,  \quad\mbox{where}\quad
   	A_\infty f:=\frac{1}{2}\Delta f+H(\nabla f).  \]
 \end{enumerate}
\end{theorem}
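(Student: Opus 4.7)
The plan is to verify the hypotheses of the Chernoff-type construction from \cite[Theorems~6.2 and 6.3]{BK20} for the family $(I(t))_{t\ge 0}$ and let that machinery produce $(S(t))_{t\ge 0}$ together with claims (i)--(vi). The hypotheses split into three groups: (a) structural properties of each $I(t)$ on $C_0$; (b) a consistency condition relating $I(s)I(t)$ with $I(s+t)$; and (c) the first-order expansion $t^{-1}(I(t)f-f)\to A_\infty f$ for smooth $f$.

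\textbf{Structural properties of $I(t)$.} The growth bound on $H$ forces $H(0)=0$, and Fenchel--Moreau then yields $L\ge 0$ with $\inf_\lambda L(\lambda)=0$, from which $I(t)0=0$. The quadratic growth of $H$ gives $L(\lambda)\ge c|\lambda|^2-c'$, so the supremum in the definition of $I(t)f$ is effectively attained on a compact set depending only on $\|f\|_\infty$ and $t$; together with the continuity and decay of the Gaussian convolution $x\mapsto \E[f(x+W_t+\lambda t)]$ for $f\in C_0$, this yields $I(t)\colon C_0\to C_0$. Convexity and monotonicity in $f$ are immediate from the definition, and the contraction $\|I(t)f-I(t)g\|_\infty\le\|f-g\|_\infty$ follows from the standard bound $|\sup_\lambda A(\lambda)-\sup_\lambda B(\lambda)|\le\sup_\lambda|A(\lambda)-B(\lambda)|$.

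\textbf{Consistency and derivative at zero.} For $f\in C_0^2$, It\^o's formula applied to $\E[f(x+W_t+\lambda t)]$ gives
\[ \tfrac{1}{t}\big(\E[f(x+W_t+\lambda t)]-f(x)\big)\longrightarrow \langle\lambda,\nabla f(x)\rangle+\tfrac{1}{2}\Delta f(x) \]
uniformly in $x$ and locally uniformly in $\lambda$; the Fenchel--Moreau identity $H(p)=\sup_\lambda(\langle\lambda,p\rangle-L(\lambda))$ then produces property~(vi) for $I$, after discarding the far-$\lambda$ contribution using the coercivity of $L$. The consistency relating $I(s)I(t)$ to $I(s+t)$ required by the iterative scheme rests on Jensen's inequality together with the stationary independent increments of $(W_t)_{t\ge 0}$.

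\textbf{Invoking the machinery.} With the above properties in hand, \cite[Theorem~6.2]{BK20} produces a subsequence $(n_l)_{l\in\N}$ along which the dyadic iterates $I(\pi_{n_l}^t)f$ converge uniformly for every $t\in\T$ and every $f\in C_0$; properties~(ii)--(v) pass to the limit on $\T$ (the semigroup property using $I(\pi_n^{s+t})=I(\pi_n^s)I(\pi_n^t)$ for $s,t\in\T$ and $n$ large). The extension from $\T$ to all $t\ge 0$ is then performed via the Lipschitz set
\[ \big\{f\in C_0\colon\sup_{t\in\T\cap(0,1]}t^{-1}\|S(t)f-f\|_\infty<\infty\big\}, \]
which contains $C_0^2$ by the first-order expansion above and is therefore dense in $C_0$; the $1$-Lipschitz property of $S(t)$ on $\T$ extends $t\mapsto S(t)f$ continuously to all of $\R_+$. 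Finally, \cite[Theorem~6.3]{BK20} transfers the first-order expansion from $I$ to $S$ through a Trotter--Kato-type estimate of the form $\|S(t)f-I(t)f\|_\infty=o(t)$ delivered by the Chernoff scheme, giving~(vi). The most delicate steps are the equicontinuity of the iterates needed for the subsequence extraction and the density of the Lipschitz set; both rest on the Gaussian smoothing in $I(t)$ and its $1$-Lipschitz property, and are what ultimately replace a direct strong continuity estimate on $(I(t))_{t\ge 0}$.
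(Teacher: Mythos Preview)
Your approach is essentially the paper's: both invoke \cite[Theorems~6.2 and~6.3]{BK20} and let that machinery produce $(S(t))_{t\ge 0}$ with properties (i)--(vi). The paper is more economical because the specific family $I(t)f(x)=\sup_\lambda\big(\E[f(x+W_t+\lambda t)]-L(\lambda)t\big)$ already falls under \cite[Subsection~6.1]{BK20}, so only two hypotheses on $L$ need checking---superlinearity $L(\lambda)/|\lambda|\to\infty$ (via the explicit bound $L(\lambda)\ge|\lambda|^2/(16K)$ for $|\lambda|\ge 2K$, obtained by plugging $x=\lambda/(4K)$ into the conjugate) and the existence of $\lambda_0$ with $L(\lambda_0)=0$ (from subdifferentiability of $H$ at $0$, which is slightly stronger than your $\inf_\lambda L(\lambda)=0$)---whereas you re-sketch the structural properties of $I(t)$ and the internals of the Chernoff construction that \cite{BK20} already handles.
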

\begin{proof}
 We only have to verify the assumptions of~\cite[Subsection~6.1]{BK20}.  First,  it holds 
 \begin{equation} \label{eq:L growth}
  L(\lambda)\geq \tfrac{|\lambda|^2}{16K}\one_{B(2K)^c}(\lambda) \quad\mbox{for all } \lambda\in\R^d.  
 \end{equation} 
 Indeed,  this follows from Assumption~\ref{ass:H} by choosing $x:=\lambda/(4K)$ in the definition
 of the convex conjugate for all $|\lambda|\geq 2K$.  Inequality~\eqref{eq:L growth} implies
 $\lim_{|\lambda|\to\infty}L(\lambda)/|\lambda|=\infty$.  Second, since $H(0)=0$ and  $H$ is 
 sub-differentiable at $0$, there exists $\lambda_0\in\R^d$ with 
 $H(0)=\langle 0,\lambda_0\rangle-L(\lambda_0)$, and therefore $L(\lambda_0)=0$.  
\end{proof}

Next, we apply the results from Section~\ref{sec:theory}.  Define $b:=8K+1$ and 
\[ \phi\colon\R_+\to\R_+,\; x\mapsto (bx-1)e^{bx}+1.  \] 
The function $\phi$ is convex, strictly increasing, twice continuously differentiable with 
$\lim_{x\to\infty}\phi(x)=\infty$ and $\phi(0)=0$.  Define $\Phi\colon\R\to\R_+,\; x\mapsto\phi(|x|)$, 
so that $\Phi(x)=0$ if and only if $x=0$.  Choose $X_t:=W_t$ for all $t\geq 0$,  $a:=K^2$ and define 
$(T(t))_{t\geq 0}$ by equation~\eqref{eq:def OSG}, i.e. 
$\big(T(t)f\big)(x):=\phi^{-1}\big(\E[\phi(e^{at}f(x+X_t))]\big)$ for all $t\geq 0$,  $f\in M^\Phi_+$ 
and $x\in\R^d$.  In order to verify Assumption~\ref{ass:OSG}(ii), we need the following auxiliary result.

\begin{lemma} \label{lem:prob}
 There exist $t_0>0$ and $r_0\geq 0$ such that
  \[ \P(|W_t|\geq r)\leq te^{-\frac{r}{t}} \quad\mbox{for all } t\in [0,t_0] \mbox{ and } r\geq r_0.  \]
\end{lemma}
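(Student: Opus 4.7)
My plan is to combine a standard Gaussian/Chernoff tail bound for $|W_t|$ with an elementary comparison between the quadratic exponent $r^2/t$ (which comes from the Gaussian tail) and the target linear exponent $r/t$ (in the claimed bound). Since $W_t$ has the same law as $\sqrt{t}\,Z$ with $Z\sim\n(0,\one)$, and $|Z|^2$ is $\chi^2_d$--distributed with moment generating function $\E[e^{\theta|Z|^2}]=(1-2\theta)^{-d/2}$ for $\theta<1/2$, Markov's inequality with $\theta=1/4$ immediately yields
\[ \P(|W_t|\geq r)=\P\big(|Z|^2\geq r^2/t\big)
	\leq 2^{d/2}\exp\!\left(-\frac{r^2}{4t}\right)
	\quad\mbox{for all } t>0 \mbox{ and } r\geq 0.  \]

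It then suffices to choose $r_0,\,t_0$ so that $2^{d/2}e^{-r^2/(4t)}\leq te^{-r/t}$ for $t\in (0,t_0]$ and $r\geq r_0$; the case $t=0$ is trivial since both sides vanish for $r>0$. Taking logarithms, the sought inequality is equivalent to
\[ \frac{r(r-4)}{4t}\geq \frac{d}{2}\log 2-\log t.  \]
I would first pick $r_0\geq 8$, which forces $r(r-4)/4\geq r$ for all $r\geq r_0$, so the left-hand side is bounded below by $r_0/t$. Since $r_0/t+\log t\to +\infty$ as $t\downarrow 0$, any $t_0>0$ small enough to ensure $r_0/t+\log t\geq \tfrac{d}{2}\log 2$ on $(0,t_0]$ works. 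This gives the lemma.

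No real difficulty arises; the only point requiring a touch of care is the bookkeeping in the comparison, namely ensuring a definite margin between the quadratic Gaussian exponent and the linear target exponent. This is precisely why $r$ must be bounded below by a constant strictly exceeding the crossing point $r=4$ of the two exponents, and why $t$ must be taken small enough that the $-\log t$ slack (which is logarithmic) is overwhelmed by the $r_0/t$ reserve (which blows up polynomially as $t\downarrow 0$).
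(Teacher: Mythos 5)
Your proof is correct, and the core of the argument is the same as the paper's: a Gaussian tail bound with quadratic exponent $r^2/(ct)$ is compared against the linear target exponent $r/t$, with $r\geq r_0$ providing the margin between the two and $t\leq t_0$ absorbing the dimensional prefactor. The only difference is cosmetic — you obtain the tail bound via the $\chi^2_d$ moment generating function and Markov's inequality, whereas the paper integrates the Gaussian density in polar coordinates — so this counts as essentially the same approach.
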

\begin{proof}
 Define $\S_u:=\{x\in\R^d\colon |x|=u\}$ for all $u\geq 0$ and let $|\S_1|$ be the area of the unit 
 sphere.  Fix $t>0$ and $r\geq 0$.  We use polar coordinates to obtain 
 \begin{align}
  \P(|W_t|\geq r) 
  &=(2\pi t)^{-\frac{d}{2}}\int_{B(r)^c}e^{-\frac{|x|^2}{2t}}\,\d x 
  =(2\pi t)^{-\frac{d}{2}}\int_r^\infty\int_{\S_u}e^{-\frac{|x|^2}{2t}}\,\d x\,\d u \nonumber \\
  &=(2\pi t)^{-\frac{d}{2}}|\S_1|\int_r^\infty u^{d-1}e^{-\frac{u^2}{2t}}\,\d u.  \label{eq:prob1}
 \end{align} 
 Choose $r_0\geq 1$ such that for all $u\geq r_0$ and $t\leq 1$, 
 \begin{equation}  \label{eq:prob2}
  e^{-\frac{u^2}{2t}}u^{d-1}  \leq e^{-\frac{2u}{t}}e^{-\frac{u}{t}}u^{d-1} \leq e^{-\frac{2u}{t}}.  
 \end{equation}
 Since $|\S_1|(2\pi t)^{-\frac{d}{2}}e^{-\frac{2u}{t}}\leq|\S_1|(2\pi t)^{-\frac{d}{2}}e^{-\frac{1}{t}}e^{-\frac{u}{t}}$
 for all $u\geq 1$ and $|\S_1|(2\pi t)^{-\frac{d}{2}}e^{-\frac{1}{t}}\to 0$ as $t\downarrow 0$,  there exists
 $t_0\in (0,1]$ such that 
 \begin{equation}  \label{eq:prob3}
  |\S_1|(2\pi t)^{-\frac{d}{2}}e^{-\frac{2u}{t}} \leq e^{-\frac{u}{t}} \quad\mbox{for all } t\in [0,t_0]. 
 \end{equation}
 Combing inequalities~\eqref{eq:prob1}-\eqref{eq:prob3} yields
 \[ \P(|W_t|\geq r)\leq \int_r^\infty e^{-\frac{u}{t}}\,\d u=te^{-\frac{r}{t}}
 	\quad\mbox{for all } t\in [0,t_0] \mbox{ and } r\geq r_0.  \qedhere \]
\end{proof}

In addition,  we need the following lemma which is similar to the estimates in~\cite[Example~5.3]{DKN19}
and in the proof of~\cite[Theorem~6.2]{BK20}.

\begin{lemma} \label{lem:est pq}
 Let $f\colon\R^d\to\R$ be a bounded,  measurable function.  Let $p,q\in (1,\infty)$ with 
 $\frac{1}{p}+\frac{1}{q}=1$. Then,  for every $\lambda, x\in\R^d$ and $t\geq 0$, 
 \[ \E[|f(x+W_t+\lambda t)|]\leq e^\frac{(q-1)|\lambda|^2t}{2}\E\left[f^p(x+W_t)\right]^\frac{1}{p}.  \]
\end{lemma}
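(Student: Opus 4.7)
The plan is to reduce the inequality to an application of H\"older's inequality after a Cameron--Martin type shift of the Brownian motion. The key identity is that the translation of the Gaussian density can be absorbed into a Girsanov-type exponential weight.

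First I would write the expectation as an integral against the Gaussian density and perform the change of variable $z = y + \lambda t$:
\[ \E[|f(x+W_t+\lambda t)|] = (2\pi t)^{-d/2}\int_{\R^d}|f(x+z)|e^{-\frac{|z-\lambda t|^2}{2t}}\d z. \]
Expanding $-|z-\lambda t|^2/(2t) + |z|^2/(2t) = \langle z,\lambda\rangle - |\lambda|^2 t/2$, this rearranges to
\[ \E[|f(x+W_t+\lambda t)|] = \E\!\left[|f(x+W_t)|\,e^{\langle \lambda,W_t\rangle - \frac{|\lambda|^2 t}{2}}\right]. \]

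Second, I would apply H\"older's inequality with conjugate exponents $p,q$ to split the product, giving
\[ \E[|f(x+W_t+\lambda t)|] \leq \E[f^p(x+W_t)]^{1/p}\,\E\!\left[e^{q\langle\lambda,W_t\rangle}\right]^{1/q} e^{-\frac{|\lambda|^2 t}{2}}. \]
Third, I would compute the moment generating function of the Gaussian vector $W_t$, namely $\E[e^{q\langle\lambda,W_t\rangle}] = e^{q^2|\lambda|^2 t/2}$, so that $\E[e^{q\langle\lambda,W_t\rangle}]^{1/q} = e^{q|\lambda|^2 t/2}$. Combining the two exponential factors yields $e^{(q-1)|\lambda|^2 t/2}$, which is exactly the claimed constant.

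I expect no real obstacle; the only mild subtlety is verifying that the translation step is valid (which it is, since $f$ is bounded and measurable, so all integrals involved are absolutely convergent and Fubini/change of variables apply without difficulty), and that the moment generating function computation uses the $d$-dimensional identity $\E[e^{\langle \mu,W_t\rangle}] = e^{|\mu|^2 t/2}$ with $\mu = q\lambda$. Boundedness of $f$ also ensures that $\E[f^p(x+W_t)]$ is finite, so no integrability issues arise in H\"older's inequality.
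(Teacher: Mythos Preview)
Your proof is correct and follows essentially the same route as the paper: a change of variables to absorb the shift $\lambda t$ into a Girsanov-type exponential weight, followed by H\"older's inequality and the Gaussian moment generating function identity. The only cosmetic difference is that the paper writes the density shift with a sign $e^{-\langle\lambda,y\rangle}$ (harmless by symmetry of $W_t$), while you have the algebraically correct $e^{+\langle\lambda,W_t\rangle}$; the structure and every estimate are otherwise identical.
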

\begin{proof}
 We use $W_t\sim\mathcal{N}(0,t\one)$ and the formula for the moment generating function of 
 the normal distribution to estimate 
 \begin{align*}
  &\E[|f(x+W_t+\lambda t)|] \\
  &=(2\pi t)^{-\frac{d}{2}}\int_{\R^d}|f(x+y+\lambda t)|\exp\left(-\frac{|y|^2}{2t}\right)\d y \\
  &=(2\pi t)^{-\frac{d}{2}}\int_{\R^d}|f(x+y)|\exp\left(-\frac{|y-\lambda t|^2}{2t}\right)\d y \\
  &=e^{-\frac{|\lambda|^2t}{2}}\int_{\R^d}|f(x+y)|\exp\left(-\langle\lambda,y\rangle\right)\n(0,t\one)(\d y) \\
  &\leq e^{-\frac{|\lambda|^2t}{2}}\left(\int_{\R^d}f^p(x+y)\,\n(0,t\one)(\d y)\right)^\frac{1}{p}
  	\left(\int_{\R^d}\exp\left(-q\langle\lambda,y\rangle\right)\n(0,t\one)(\d y)\right)^\frac{1}{q} \\
  &=e^{-\frac{|\lambda|^2t}{2}}\E\left[f^p(x+W_t)\right]^\frac{1}{p}
  	\E\left[\exp\left(q\langle\lambda,W_t\rangle\right)\right]^\frac{1}{q} \\
  &=e^{-\frac{|\lambda|^2t}{2}}\E\left[f^p(x+W_t)\right]^\frac{1}{p}e^{\frac{q|\lambda|^2t}{2}} 
  	=e^\frac{(q-1)|\lambda|^2t}{2}\E\left[f^p(x+W_t)\right]^\frac{1}{p}.  \qedhere
 \end{align*}
\end{proof}

Now, we are ready to extend the semigroup $(S(t))_{t\geq 0}$ on $C_0$ to a semigroup
$(\tilde{S}(t))_{t\geq 0}$ on $M^\Phi$.  In the sequel,  for notationally simplicity the extension 
will be again denoted by $(S(t))_{t\geq 0}$.  Furthermore, $A$ denotes the generator of
$(S(t))_{t\geq 0}$ w.r.t. the Luxembourg norm. In the proof of the following theorem we verify 
Assumption~\ref{ass:OSG} and show that $|S(t)f|\leq T(t)|f|$ holds for all $t\geq 0$ and 
$f\in C_0\cap M^\Phi$.  Hence,  we can apply Theorem~\ref{thm:OSG},  Theorem~\ref{thm:ext} 
and Theorem~\ref{thm:PDE}.

\begin{theorem} \label{thm:gexp ext}
 There exists a strongly continuous, convex, locally uniformly Lipschitz continuous semigroup 
 $(S(t))_{t\geq 0}$ on $M^\Phi$ with
 \[ \lim_{t\downarrow 0}\left\|\frac{S(t)f-f}{t}-\frac{1}{2}\Delta f-H(\nabla f)\right\|_\Phi=0
 	\quad\mbox{for all } f\in C^2_c.  \]
 In particular,  for every $f\in D(A)$,  the function $u(t):=S(t)f$, $t\geq 0$, solves the 
 abstract Cauchy problem~\eqref{eq:CP2} and satisfies 
 $u\in C^1([0,\infty);M^\Phi)\cap C([0,\infty);D(A))$. 
\end{theorem}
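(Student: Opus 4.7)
The plan is to obtain this theorem as a direct consequence of Theorem~\ref{thm:gexp C_0}, Theorem~\ref{thm:ext} and Theorem~\ref{thm:PDE}. To invoke Theorem~\ref{thm:ext} I must do two things: verify that $(\Phi, W)$ satisfies Assumption~\ref{ass:OSG} with the $\phi$, $a$ and $X_t = W_t$ fixed before the statement, and check the domination $|S(t)f|\le T(t)|f|$ for all $f\in C_0\cap M^\Phi$. Once these are in place, the extended semigroup has all the claimed properties, and the limit formula on $C_c^2$ follows from the last clause of Theorem~\ref{thm:ext} together with the identification $A_\infty f=\tfrac12\Delta f+H(\nabla f)$ provided by Theorem~\ref{thm:gexp C_0}(vi).

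For Assumption~\ref{ass:OSG}(i), the distribution $\mathbb{P}\circ W_t^{-1}$ is Gaussian with bounded density for $t>0$, so (i) is immediate. For (ii), fix $c\ge 0$ and pick any $r>bc$ with $r\ge r_0$ from Lemma~\ref{lem:prob}. By that lemma, for $t\in(0,t_0]$ we have $\mathbb{P}(|W_t|\ge r)\le t e^{-r/t}$, while
\[
\Phi(c/t)=\phi(c/t)=\left(\tfrac{bc}{t}-1\right)e^{bc/t}+1.
\]
Therefore $\mathbb{P}(|W_t|\ge r)\,\Phi(c/t)$ is bounded by a polynomial in $1/t$ times $t\,e^{-(r-bc)/t}$, which tends to $0$ as $t\downarrow 0$ since $r-bc>0$.

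The technical heart is the pointwise domination $|I(t)f|\le T(t)|f|$ for $f\in C_0\cap M^\Phi$, which propagates to $S(t)$ by the Chernoff-type approximation of Theorem~\ref{thm:gexp C_0}(i). The argument is threefold. First, $I(t)$ is monotone, so $I(t)f\le I(t)|f|$, and using that there exists $\lambda_0\in\R^d$ with $L(\lambda_0)=0$ (as shown in the proof of Theorem~\ref{thm:gexp C_0}), one also has $-I(t)f\le\mathbb{E}[|f|(\cdot+W_t+\lambda_0 t)]\le I(t)|f|$; hence $|I(t)f|\le I(t)|f|$. Second, for $g\ge 0$, I would show $I(t)g\le T(t)g$. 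Using Lemma~\ref{lem:est pq} with a well chosen Hölder pair $(p,q)$, together with the coercivity $L(\lambda)\ge|\lambda|^2/(16K)$ on $B(2K)^c$ from inequality~\eqref{eq:L growth}, the term $-L(\lambda)t$ dominates the Gaussian blow-up $e^{(q-1)|\lambda|^2 t/2}$, while the factor $e^{K^2 t}$ together with the convexity of $\phi$ absorbs the $L^p$–$L^1$ exchange; the values $a=K^2$ and $b=8K+1$ are tuned precisely so that the resulting inequality fits inside $\phi^{-1}\!\bigl(\mathbb{E}[\phi(e^{K^2 t}g(\cdot+W_t))]\bigr)$. Third, combining Theorem~\ref{thm:OSG}(iii) with the monotonicity of $T(s)$ (which comes from monotonicity of $\phi$), a straightforward induction yields $|I(2^{-n})^{2^n t}f|\le T(2^{-n})^{2^n t}|f|\le T(t)|f|$, and passing to the limit along the subsequence in Theorem~\ref{thm:gexp C_0}(i) gives $|S(t)f|\le T(t)|f|$ almost everywhere.

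With Assumption~\ref{ass:OSG} and the domination established, Theorem~\ref{thm:ext} delivers the unique strongly continuous, convex, locally uniformly Lipschitz extension $(S(t))_{t\ge 0}$ on $M^\Phi$. For $f\in C_c^2$, Theorem~\ref{thm:gexp C_0}(vi) gives $f\in D(A_\infty)$ with $A_\infty f=\tfrac12\Delta f+H(\nabla f)\in C_c$ (since $\nabla f$ and $\Delta f$ are continuous with compact support and $H$ is continuous), so the last line of Theorem~\ref{thm:ext} yields the required convergence in the Luxemburg norm. The Cauchy problem assertion is then a direct invocation of Theorem~\ref{thm:PDE}. I expect the only genuinely delicate step to be the pointwise estimate $I(t)g\le T(t)g$; once it is in place the rest is essentially bookkeeping, and the specific parameter choices $a=K^2$, $b=8K+1$ are exactly what makes the Hölder/Gaussian balancing close.
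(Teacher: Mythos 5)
Your architecture coincides with the paper's: verify Assumption~\ref{ass:OSG} for the chosen $\phi$, $a=K^2$ and $X_t=W_t$, establish $|I(t)f|\le T(t)|f|$, propagate it through the iteration via Theorem~\ref{thm:OSG}(iii) and Theorem~\ref{thm:gexp C_0}(i), and then invoke Theorems~\ref{thm:ext} and~\ref{thm:PDE} together with Theorem~\ref{thm:gexp C_0}(vi). Your check of Assumption~\ref{ass:OSG}(ii) via Lemma~\ref{lem:prob} is fine and matches the paper's (which takes $r\ge 2bc$ and absorbs the polynomial factor into $e^{2bc/t}$). Two smaller remarks on the propagation step: the limit in Theorem~\ref{thm:gexp C_0}(i) only covers dyadic $t$, so you still need the paper's final step extending $|S(t)f|\le T(t)|f|$ to all $t\ge 0$ (continuity of $t\mapsto S(t)f$ in the sup norm plus dominated convergence for $t\mapsto (T(t)|f|)(x)$), since condition~\eqref{eq:ST} is required for every $t\ge 0$; and your reduction $|I(t)f|\le I(t)|f|$ is a harmless reorganization of the paper's direct bound on $\E[|f(x+W_t+\lambda t)|]-L(\lambda)t$.

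The genuine gap is that the central estimate $I(t)g\le T(t)g$ for $g\ge 0$ is announced (``I would show'') and motivated, but not proved, and this is where essentially all the content of the theorem sits. The paper's argument splits on $\lambda$. For $|\lambda|\le 2K$ one discards $-L(\lambda)t\le 0$, applies Lemma~\ref{lem:est pq} with $p=q=2$ to bound $\E[|f(x+W_t+\lambda t)|]$ by $e^{2K^2t}\,\E[f^2(x+W_t)]^{1/2}$, and then compares the $L^2$ certainty equivalent with the $\phi$-certainty equivalent via Lemma~\ref{lem:ap} applied to $u(x)=x^2$, $v=\phi$ (this is where $a$ enters). For $|\lambda|>2K$ one first uses Jensen in the form $\E[X]\le\log\E[e^{X}]$, then Lemma~\ref{lem:est pq} with $p=8K+1$ and $q=1+\tfrac{1}{8K}$, so that $\tfrac{q-1}{2}-\tfrac{1}{16K}=0$ and the Gaussian blow-up cancels \emph{exactly} against the coercivity~\eqref{eq:L growth}; what survives is $\tfrac{1}{p}\log\E[e^{p|f|(x+W_t)}]$, which is dominated by $T(t)|f|$ via a second application of Lemma~\ref{lem:ap} with $u(x)=e^{px}$, $v=\phi$, using precisely $b=p$. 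Your sketch names the right ingredients (Lemma~\ref{lem:est pq}, the coercivity of $L$, the tuning of $a$ and $b$) but exhibits neither the case split, nor the exact cancellation that forces $q=1+\tfrac1{8K}$, nor the two applications of the Arrow--Pratt comparison Lemma~\ref{lem:ap}; saying the parameters ``are tuned precisely so that the inequality fits'' asserts the conclusion rather than deriving it. Everything downstream of this estimate in your write-up is correct bookkeeping.
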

\begin{proof}
 First, we verify Assumption~\ref{ass:OSG}. Clearly, ~(i) holds. To prove~(ii), choose 
 $t_0>0$ and $r_0\geq 0$ such that we can apply Lemma~\ref{lem:prob}.  Let
 $r:=\max\{r_0,2bc\}$. Then,  
 \[ \P(|W_t\geq r)\Phi\left(\frac{c}{t}\right) \leq te^{-\frac{r}{t}}e^{\frac{2bc}{t}}=te^{\frac{2bc-r}{t}}
 	\to 0 \quad\mbox{as } t\downarrow 0.  \]
 	
 Second,  we show $|I(t)f|\leq T(t)|f|$ for all $t\geq 0$ and $f\in C_0\cap M^\Phi$.  Fix $t\geq 0$
 and $f\in C_0\cap M^\Phi$.  Let $\lambda\in\R^d$.  We distinguish between two cases.  In the 
 first case,  assume $|\lambda|\leq 2K$.  Then,  Lemma~\ref{lem:est pq} with $p=q=2$ implies
 \begin{align*}
 &\E[|f(x+W_t+\lambda t)|]-L(\lambda)t \leq\E[|f(x+W_t+\lambda t)|] \\
  &\leq e^{2K^2t}\E[f^2(x+W_t)]^\frac{1}{2} =\E[(e^{at}|f|)^2(x+W_t)|]^\frac{1}{2}.  
 \end{align*}
 Moreover, we apply Lemma~\ref{lem:ap} with $u(x):=x^2$ and $v(x):=\phi(x)$. Indeed,
 \[ \frac{u''(x)}{u'(x)}=\frac{1}{x}\leq\frac{bx+1}{x}=\frac{v''(x)}{v'(x)}.  \]
 Thus, we obtain 
 \[ \E[|f(x+W_t+\lambda t)|]-L(\lambda)t \leq\phi^{-1}\big(\E[\phi(e^{at}|f|(x+W_t))]\big)
 	=\big(T(t)|f|\big)(x).  \]
 In the second case,  assume $|\lambda|>2K$.  Choose $p:=8K+1$ and $q:=1+\frac{1}{8K}$. 
 It follows from Jensen's inequality,  inequality~\eqref{eq:L growth} and Lemma~\ref{lem:est pq} 
 that
 \begin{align*}
  \E[|f(x+W_t+\lambda t)|]-L(\lambda)t
  &\leq\log\big(\E[\exp(|f(x+W_t+\lambda t)|]\big)-\frac{|\lambda|^2t}{16K} \\
  &\leq\log\left(e^{\frac{(q-1)|\lambda|^2t}{2}}\E[\exp(p|f(x+W_t)|]^\frac{1}{p}\right)-\frac{|\lambda|^2t}{16K} \\
  &=\frac{1}{p}\log\big(\E[\exp(p|f(x+W_t)|]\big)+\left(\frac{q-1}{2}-\frac{1}{16K}\right)|\lambda|^2t \\
  &=\frac{1}{p}\log\big(\E[\exp(p|f(x+W_t)|]\big)
 \end{align*}
 Let $u(x):=e^{px}$ and $v(x):=\phi(x)$.  Since $b=p$, it holds
 \[ \frac{u''(x)}{u'(x)}=p\leq\frac{bx+1}{x}=\frac{v''(x)}{v'(x)}.  \]
 Hence, Lemma~\ref{lem:ap} implies $\E[|f(x+W_t+\lambda t)|]-L(\lambda)t\leq \big(T(t)|f|\big)(x)$. 
 For the lower bound,  we use that there exists $\lambda_0\in\R^d$ with $L(\lambda_0)=0$, to obtain
 \[ -\big(T(t)|f|\big)(x)\leq -\E[|f(x+W_t+\lambda_0 t)|]\leq \big(I(t)f\big)(x)\leq\big(T(t)|f|\big)(x),  \]
 and therefore $|I(t)f|\leq T(t)|f|$. 
 
 Third, fix $f\in C_0\cap M^\Phi$.  Let $s,t\geq 0$.  From the second part of this proof, monotonicity of
 $T(s)$ and and Theorem~\ref{thm:OSG}(iii), we obtain
 \[ |I(s)I(t)f|\leq T(s)|I(t)f|\leq T(s)T(t)|f|\leq T(s+t)|f|.  \]
 By induction, we conclude $|I(\pi_n^t)f|\leq T(t)|f|$ for all $t\in\T$ and $n\in\N$.  Thus, 
 Theorem~\ref{thm:gexp C_0}(i) implies $|S(t)f|\leq T(t)|f|$ for all $t\in\T$.  Let $t\geq 0$ be arbitrary and 
 $x\in\R^d$.  Choose a sequence $(t_n)_{n\in\N}\subset\T$ with $t_n\to t$.  It follows from 
 Theorem~\ref{thm:gexp C_0}(v) that $\lim_{n\to\infty}\|S(t)f-S(t_n)f\|_\infty=0$, and from the dominated
 convergence theorem that $(T(t)|f|)(x)=\lim_{n\to\infty}(T(t_n)|f|)(x)$.  This shows $|(S(t)f)(x)|\leq (T(t)|f|)(x)$. 
\end{proof}

\subsection{The symmetric Lipschitz set}
\label{sec:reg}

Lipschitz sets have been systematically studied in~\cite[Section~5]{BK20}, see also~\cite{DKN20}. 
We begin with the formal definition.

\begin{definition}
 The Lipschitz set $\L^S$ consists of all $f\in C_0\cap M^\Phi$ for which there exist a non-decreasing 
 function $\gamma(f,\cdot)\colon\R_+\to\R_+$ such that
 \[ \|S(t)f-f\|_\infty\leq\gamma(f,T)t \quad\mbox{for all } T\geq 0 \mbox{ and }  t\in [0,T].  \]
 The symmetric Lipschitz set is defined as $\L^S_{\sym}:=\{f\in\L^S\colon -f\in\L^S\}$.  
\end{definition}

Since $(S(t))_{t\geq 0}$ is nonlinear,  we have in general $\L^S_{\sym}\subsetneq\L^S$.  Similar to 
the domain of the generator,  the (symmetric) Lipschitz set is invariant under the semigroup.  Determining
$\L^S$ or $D(A)$ is very difficult, but fortunately it is possible for $\L^S_{\sym}$.  

Denote by $L^\infty:=L^\infty(\R^d;\R)$ the set of all bounded Borel measurable functions 
$f\colon\R^d\to\R$.  For $k\in\N$ and $p\in [1,\infty]$, let $W^{k,p}:=W^{k,p}(\R^d;\R)$ 
be the $L^p$-Sobolev space of order $k$ and $W^{k,p}_{\loc}:=W^{k,p}_{\loc}(\R^d;\R)$ 
the respective local Sobolev space. Furthermore, for $f\in W^{1,\infty}$, the 
weak Laplacian  exists in $L^\infty$ if there exists a function $g\in L^\infty$ such that
$\int_{\R^d}gh\,\d\lambda=-\int_{\R^d}\langle\nabla f,\nabla h\rangle\,\d\lambda$ 
for all $h\in C^\infty_c$. In this case, we define $\Delta f:=g$.

\begin{lemma} \label{lem:Lset}
 It holds $S(t)\colon\L^S_{\sym}\to\L^S_{\sym}$ for all $t\geq 0$,  where
 \begin{align*}
  \L^S_{\sym} &=\{f\in W^{1,\infty}\cap C_0\cap M^\Phi\colon\Delta f \mbox{ exists in } L^\infty\} \\
  &=\bigcap_{p\geq 1} \{f\in W^{2,p}_{\loc}\cap C_0\cap M^\Phi\colon \Delta f\in L^\infty\}. 
 \end{align*}
 For every $f\in\L^S_{\sym}$ and $T\geq 0$,  one can choose $\gamma(f,T):=\gamma(f)$ as a
 constant depending only on $\|\Delta f\|_\infty$ and $\|\nabla f\|_\infty$.  In addition,
 \begin{equation} \label{eq:Lset}
  \|S(s)f-S(t)f\|_\infty\leq\gamma(f)|s-t| \quad\mbox{for all } s,t\geq 0.  
 \end{equation}
\end{lemma}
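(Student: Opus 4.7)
The plan is to establish the set equality through two inclusions and then to deduce the invariance and the time-Lipschitz estimate~\eqref{eq:Lset} from the semigroup property combined with the $1$-Lipschitz continuity of $S(t)$ on $C_0$ from Theorem~\ref{thm:gexp C_0}(iv). The identity
\[ \{f\in W^{1,\infty}\cap C_0\cap M^\Phi:\Delta f\in L^\infty\}
   =\bigcap_{p\geq 1}\{f\in W^{2,p}_{\loc}\cap C_0\cap M^\Phi:\Delta f\in L^\infty\} \]
follows directly from interior $L^p$-regularity for the Laplacian: $\Delta f\in L^\infty\subset L^p_{\loc}$ together with $f\in L^\infty$ already forces $f\in W^{2,p}_{\loc}$ for every $p\in[1,\infty)$, and the reverse inclusion is trivial.

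For the direction ``$\supseteq$'', I would first show $\|I(t)f-f\|_\infty\leq\gamma(f)t$ and then transport the estimate to $S(t)$ through the Chernoff limit. For $f$ in the regularity set, mollifying to $f_\epsilon:=f*\eta_\epsilon\in C^\infty$ preserves the bounds $\|\nabla f\|_\infty$ and $\|\Delta f\|_\infty$, and It\^o's formula yields
\[ \E[f_\epsilon(x+W_t+\lambda t)]-f_\epsilon(x)\leq t\bigl(\tfrac12\|\Delta f\|_\infty+|\lambda|\,\|\nabla f\|_\infty\bigr). \]
Taking the supremum over $\lambda$ and invoking Fenchel duality with Assumption~\ref{ass:H} gives $\sup_\lambda(|\lambda|\,\|\nabla f\|_\infty-L(\lambda))=\sup_{|y|\leq\|\nabla f\|_\infty}H(y)\leq K(\|\nabla f\|_\infty+\|\nabla f\|_\infty^2)$, while choosing $\lambda_0$ with $L(\lambda_0)=0$ (which exists, cf.\ the proof of Theorem~\ref{thm:gexp C_0}) produces the matching lower bound. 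Sending $\epsilon\downarrow 0$ with the $1$-Lipschitz continuity of $I(t)$ on $C_0$ transfers the estimate to $f$ itself. I then iterate: since $\|I(\tau)^{k+1}g-I(\tau)^k g\|_\infty\leq\|I(\tau)g-g\|_\infty$, a telescoping sum gives $\|I(\pi_n^t)f-f\|_\infty\leq\gamma(f)t$ for all $t\in\T$, and the Chernoff convergence combined with strong continuity extends this to all $t\geq 0$. Applying the argument to $-f$ shows $f\in\L^S_{\sym}$.

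The converse direction ``$\subseteq$'' is where I expect the main technical obstacle. For $f\in\L^S_{\sym}$ with $\gamma:=\max\{\gamma(f,1),\gamma(-f,1)\}$, the plan is to reconstruct distributional estimates on $\tfrac12\Delta f+\lambda\cdot\nabla f$ from the bilateral sup-norm Lipschitz bound. The key preparatory step is the Chernoff lower envelope
\[ S(t)g(x)\geq\E[g(x+W_t+\lambda t)]-L(\lambda)t,\quad g\in C_0,\ t\geq 0,\ \lambda\in\R^d, \]
which I would obtain by iterating $I(2^{-n})h(x)\geq\E[h(x+W_{2^{-n}}+\lambda 2^{-n})]-L(\lambda)2^{-n}$ along $\pi_n^t$ (telescoping via the stationary and independent Brownian increments) and then passing to the Chernoff limit. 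Applied to $g=\pm f$ and combined with $\|S(t)(\pm f)\mp f\|_\infty\leq\gamma t$, this gives $|\E[f(x+W_t+\lambda t)]-f(x)|\leq(\gamma+L(\lambda))t$. Testing against $\varphi\in C_c^\infty$, transferring the shift onto $\varphi$ by Fubini, and using
\[ \lim_{t\downarrow 0}\tfrac1t\bigl(\E[\varphi(\cdot-W_t-\lambda t)]-\varphi\bigr)
   =\tfrac12\Delta\varphi-\lambda\cdot\nabla\varphi\quad\text{in }C_0 \]
with a uniform bound for small $t$, dominated convergence yields $|\langle f,\tfrac12\Delta\varphi-\lambda\cdot\nabla\varphi\rangle|\leq(\gamma+L(\lambda))\|\varphi\|_1$. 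Therefore the distribution $\tfrac12\Delta f+\lambda\cdot\nabla f$ belongs to $L^\infty$ with norm at most $\gamma+L(\lambda)$; taking $\lambda=0$ gives $\Delta f\in L^\infty$, and evaluating at $\lambda\in B(r)\setminus\{0\}$ with $r$ from Assumption~\ref{ass:H} (on which $L$ is bounded) then delivers $\nabla f\in L^\infty$.

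Finally, for the invariance and~\eqref{eq:Lset}, the semigroup property with the $1$-Lipschitz continuity of $S(s)$ on $C_0$ yields, for $0\leq s\leq t$,
\[ \|S(t)f-S(s)f\|_\infty=\|S(s)S(t-s)f-S(s)f\|_\infty\leq\|S(t-s)f-f\|_\infty\leq\gamma(f)(t-s), \]
and the same identity with $f$ replaced by $S(\sigma)f$ shows $S(\sigma)f\in\L^S$ with the same constant, so that $\L^S_{\sym}$ is invariant under $S(t)$ and $\gamma(f,T)$ may be chosen constant in $T$.
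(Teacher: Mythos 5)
The paper's own ``proof'' of this lemma is essentially a citation to \cite[Theorem~6.5]{BK20} and \cite[Corollary~2.8]{BK20}, so your self-contained reconstruction is necessarily a different route. The two inclusions for the characterization are sound in outline: the direction ``$\supseteq$'' via mollification, It\^o's formula, the identity $\sup_\lambda(c|\lambda|-L(\lambda))=\sup_{|y|\le c}H(y)$ and the telescoping Chernoff iteration is exactly the right mechanism and also delivers the $T$-independence of $\gamma(f,T)$; the direction ``$\subseteq$'' via the lower envelope $S(t)g\ge\E[g(\cdot+W_t+\lambda t)]-L(\lambda)t$ applied to $\pm f$ and dualized against test functions is a clean way to recover $\Delta f,\nabla f\in L^\infty$ (using that $L$ is finite on a ball spanning $\R^d$, by Assumption~\ref{ass:H}). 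The derivation of \eqref{eq:Lset} from the semigroup identity and the $1$-Lipschitz bound of Theorem~\ref{thm:gexp C_0}(iv) is also correct.

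There is, however, a genuine gap in the last paragraph: your argument only shows $S(\sigma)f\in\L^S$, not $S(\sigma)f\in\L^S_{\sym}$. For the latter you must bound $\|S(s)(-S(\sigma)f)+S(\sigma)f\|_\infty$ by $Cs$, and this does \emph{not} follow from ``the same identity with $f$ replaced by $S(\sigma)f$'': since $S$ is nonlinear you cannot pull the minus sign through, and convexity only gives the one-sided comparison $-S(\sigma)f\le S(\sigma)(-f)$, with a gap that is in general of order $\sigma$ rather than $s$. Concretely, the natural chain $S(s)(-S(\sigma)f)\le S(s)(S(\sigma)(-f))=S(\sigma)(S(s)(-f))\le S(\sigma)(-f)+\gamma(f)s$ leaves you with the term $S(\sigma)(-f)+S(\sigma)f$, which is nonnegative but not $O(s)$. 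You also cannot close the loop via your characterization, because concluding $S(\sigma)f\in\L^S_{\sym}$ from membership in the regularity set would require the bounds $\|\Delta S(\sigma)f\|_\infty,\|\nabla S(\sigma)f\|_\infty<\infty$, which is precisely what the invariance statement encodes. This is the step the paper outsources to \cite[Theorem~5.2]{BK20} (the comparison between $S^+(t)=S(t)$ and $S^-(t)=-S(t)(-\,\cdot\,)$), which yields the two-sided estimate $\|S(s)(-S(t)f)+S(t)f\|_\infty\le 2\gamma(f)s$ quoted as \eqref{eq:Lset sym} in the proof of Theorem~\ref{thm:reg}; some such additional structural input is needed to complete your argument.
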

\begin{proof}
 The characterization and invariance of $\L^S_{\sym}$ have been established 
 in~\cite[Theorem~6.5]{BK20}.  The choice of $\gamma(f,T)$ can be derived immediately from the 
 corresponding proof.  Inequality~\eqref{eq:Lset} follows from the proof of~\cite[Corollary~2.8]{BK20}.  
\end{proof}

On the one hand, we know from Theorem~\ref{thm:gexp ext} that the abstract Cauchy problem~\eqref{eq:CP2} 
has a solution which is represented by the semigroup.  On the other hand, we know from Theorem~\ref{lem:Lset} 
that for elements of the symmetric Lipschitz set the differential operator on the right-hand side of
equation~\eqref{eq:CP3} is well-defined.  The natural questions arises whether this differential
operator coincides with the generator.  The following theorem gives a positive answer to this
question.

\begin{theorem} \label{thm:reg}
 Let $f\in D(A)\cap\L^S_{sym}$ and define $u(t):=S(t)f$ for all $t\geq 0$.  Then,  $u$ solves the 
 Cauchy problem~\eqref{eq:CP3},  i.e.,  
 \[ Au(t)=\frac{1}{2}\Delta u(t)+H(\nabla u(t))  \quad\mbox{for all } t\geq 0,  \]
 where $u(t)\in \{g\in W^{1,\infty}\cap C_0\cap M^\Phi\colon\Delta g \mbox{ exists in } L^\infty\}$.
 Furthermore,  there exists a constant
 $C$ depending only on $\|\Delta f\|_\infty$,  $\|\nabla f\|_\infty$ and $H$ such that
 \begin{equation} \label{eq:u bound}
  \sup_{t\geq 0}\|\partial_t u(t)\|_\infty+\|\Delta u(t)\|_\infty+\|\nabla u(t)\|_\infty\leq C.
 \end{equation}
\end{theorem}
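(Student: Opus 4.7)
The plan is to combine invariance of both $D(A)$ and $\L^S_{\sym}$ under the semigroup with a mollification-plus-cutoff approximation, and to use closedness of the $M^\Phi$-generator to transfer the explicit form $Af=\tfrac12\Delta f+H(\nabla f)$, known on $C^2_c$ from Theorem~\ref{thm:gexp ext}, to all of $D(A)\cap\L^S_{\sym}$. First, Theorem~\ref{thm:PDE}(i) yields $u(t)\in D(A)$ and Lemma~\ref{lem:Lset} yields $u(t)\in\L^S_{\sym}$ for every $t\geq 0$, and the characterization in Lemma~\ref{lem:Lset} already gives the required regularity $u(t)\in W^{1,\infty}\cap C_0\cap M^\Phi$ with $\Delta u(t)\in L^\infty$.

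The core task is then to prove $Ag=\tfrac12\Delta g+H(\nabla g)$ for an arbitrary $g\in D(A)\cap\L^S_{\sym}$; substituting $g:=u(t)$ and invoking Theorem~\ref{thm:PDE}(ii) for $\partial_t u(t)=Au(t)$ then yields the Cauchy problem pointwise in $t$. I would construct approximants $g_n:=\chi(\cdot/R_n)(g*\eta_n)\in C^2_c$ from a standard mollifier $\eta_n$ with $\supp(\eta_n)\subset B_{\R^d}(1/n)$, a smooth cutoff $\chi\in C^2_c$ equal to one on $B_{\R^d}(1)$, and a sequence $R_n\uparrow\infty$ to be tuned below. Theorem~\ref{thm:gexp ext} then ensures $g_n\in D(A)$ with $Ag_n=\tfrac12\Delta g_n+H(\nabla g_n)$, while Lemma~\ref{lem:molify} together with dominated convergence (applied using $g\in M^\Phi$) gives $g_n\to g$ in $M^\Phi$. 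Closedness of $A$ from Theorem~\ref{thm:PDE}(iii) then closes the loop, provided one can establish $Ag_n\to\tfrac12\Delta g+H(\nabla g)$ in $M^\Phi$; this simultaneously shows the right-hand side belongs to $M^\Phi$.

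The main obstacle is exactly this $M^\Phi$-convergence of the generators. On any fixed ball $B_{\R^d}(R)$ with $R_n>R+1/n$, the cutoff is inactive and $\nabla g_n=\nabla g*\eta_n$, $\Delta g_n=\Delta g*\eta_n$ are uniformly bounded in $L^\infty$ by $\|\nabla g\|_\infty$, $\|\Delta g\|_\infty$ and converge pointwise a.e.~to $\nabla g$, $\Delta g$; local Lipschitz continuity of $H$ together with order continuity of the Luxemburg norm (noted in Section~\ref{sec:theory}) gives the $M^\Phi$-convergence on each fixed ball. The delicate point is controlling the tails from the cutoff, namely the terms
\[ R_n^{-1}\nabla\chi(\cdot/R_n)\cdot(\nabla g*\eta_n) \quad\mbox{and}\quad R_n^{-2}\Delta\chi(\cdot/R_n)(g*\eta_n), \]
which are supported in the annulus $B_{\R^d}(2R_n)\setminus B_{\R^d}(R_n)$ whose volume grows like $R_n^d$. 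One must tune $R_n$ carefully, using $g\in C_0\cap M^\Phi$ (so that $g*\eta_n$ and $\nabla g*\eta_n$ decay on $\{|x|\geq R_n\}$) together with the precise growth of $\phi(x)=(bx-1)e^{bx}+1$, to force these annular terms to vanish in $M^\Phi$. This tuning is the main technical work of the proof.

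Once the identification is in place, the bound~\eqref{eq:u bound} is obtained as follows. Inequality~\eqref{eq:Lset} of Lemma~\ref{lem:Lset} gives $\|\partial_t u(t)\|_\infty\leq\gamma(f)$ uniformly in $t$, with $\gamma(f)$ depending only on $\|\nabla f\|_\infty$ and $\|\Delta f\|_\infty$. The operators $I(t)$ in Section~\ref{sec:sg} are manifestly translation-covariant and this property is inherited by the subsequence limit in Theorem~\ref{thm:gexp C_0}(i), so combining translation-covariance with the sup-norm nonexpansivity in Theorem~\ref{thm:gexp C_0}(iv) yields
\[ |u(t,x+h)-u(t,x)|=|S(t)(f(\cdot+h))(x)-S(t)f(x)|\leq\|f(\cdot+h)-f\|_\infty\leq\|\nabla f\|_\infty|h|, \]
hence $\|\nabla u(t)\|_\infty\leq\|\nabla f\|_\infty$. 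The growth hypothesis on $H$ then bounds $\|H(\nabla u(t))\|_\infty\leq K(\|\nabla f\|_\infty+\|\nabla f\|_\infty^2)$, and solving the pointwise identity $\tfrac12\Delta u(t)=\partial_t u(t)-H(\nabla u(t))$ for $\Delta u(t)$ bounds $\|\Delta u(t)\|_\infty$ in terms of the same quantities, producing a uniform constant $C$ depending only on $\|\nabla f\|_\infty$, $\|\Delta f\|_\infty$, and $K$.
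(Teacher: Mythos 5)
Your skeleton (invariance of $D(A)$ and $\L^S_{\sym}$, mollification plus cutoff, closedness of $A$) matches the paper's, and the second half of your argument is sound: $\|\partial_t u(t)\|_\infty\leq\gamma(f)$ via~\eqref{eq:Lset}, the gradient bound via translation covariance of $I(t)$ and the sup-norm contraction in Theorem~\ref{thm:gexp C_0}(iv), and then $\|\Delta u(t)\|_\infty$ by solving the PDE for $\Delta u(t)$. That last chain is in fact a cleaner route to~\eqref{eq:u bound} than the paper's, which instead derives the gradient and Laplacian bounds from the symmetric Lipschitz estimates~\eqref{eq:Lset} and~\eqref{eq:Lset sym} via the arguments of~\cite{BK20}.

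The gap is in the central identification step, and it is exactly the point you defer. You merge mollification and cutoff into one approximation $g_n=\chi(\cdot/R_n)(g*\eta_n)$ and need $Ag_n\to\tfrac12\Delta g+H(\nabla g)$ \emph{in $M^\Phi$} for an arbitrary $g\in D(A)\cap\L^S_{\sym}$. The annular terms you isolate are of size $O(R_n^{-1}\|\nabla g\|_\infty)$ on a set of volume $\sim R_n^d$, and since $\Phi(x)\sim b^2x^2/2$ near $0$, the crude bound gives $\int_{\rm annulus}\Phi(\cdot/m)\,\d\lambda\sim R_n^{d-2}$, which does not vanish for $d\geq 2$. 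No choice of $R_n$ fixes this by itself; one would need quantitative decay of $\nabla g$ (and of $g$) at infinity, which is not available for a general element of $\L^S_{\sym}$ (one only has $\nabla g,\Delta g\in L^\infty$ and $g\in C_0$, with no rate). The paper avoids the problem by splitting the argument: first it applies closedness of $A$ only to \emph{compactly supported} $g\in\L^S_{\sym}$, where $g_n=g*\eta_n$ stays supported in a fixed compact set and $\nabla g_n,\Delta g_n$ are uniformly bounded there, so dominated convergence in $M^\Phi$ is immediate and no cutoff is needed; second, for $u(t)$ it applies the cutoff $\zeta_n u(t)$ and passes to the limit only \emph{pointwise a.e.}, which suffices because $Au(t)=\partial_t u(t)$ is already known to exist as an element of $M^\Phi$ and merely has to be identified a.e.\ with $\tfrac12\Delta u(t)+H(\nabla u(t))$. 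You should restructure your argument along these lines (or otherwise supply the missing decay of $\nabla g$ on the annuli); as written, the ``tuning of $R_n$'' you postpone is not a technicality but the step where the proof currently fails.
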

\begin{proof}
 We start by proving $\partial_t u(t)=\frac{1}{2}\Delta u(t)+H(\nabla u(t))$ for all $t\geq 0$.  
 First,  let $g\in\L^S_{\sym}$ with compact support.  We show that $g\in D(A)$ with
 $Ag=\frac{1}{2}\Delta g+H(\nabla g)$.  To do so,  let $\eta$ be a molifier,
 i.e. $\eta\in C_c^\infty$ with $\eta\geq 0$ and $\int_{\R^d}\eta\,\d\lambda=1$.  Define
 $\eta_n(x):=n^d\eta(nx)$ and $g_n:=g*\eta_n$ for all $n\in\N$ and $x\in\R^d$.  Since
 $g_n\in C^\infty_c$, Theorem~\ref{thm:gexp ext} implies $g_n\in D(A)$ with 
 $Ag_n=\frac{1}{2}\Delta g_n+H(\nabla g_n)$ for all $n\in\N$.  By Lemma~\ref{lem:Lset},
 the functions $\nabla g$ and $\Delta g$ are bounded and compactly supported. Hence,
 we can use the dominated convergence theorem~\cite[Theorem~14 in Chapter~3.4]{Rao}, to 
 conclude
 \[ \lim_{n\to\infty}\left\|\frac{1}{2}\Delta g+H(\nabla g)-\frac{1}{2}\Delta g_n-H(\nabla g_n)\right\|_\Phi=0.  \]
 Since $A$ is closed,  we obtain $g\in D(A)$ with $Ag=\frac{1}{2}\Delta g+H(\nabla g))$.
 Second,  let $\zeta$ be a cutting function,  i.e., $\zeta\in C^\infty_c$,  $0\leq\zeta\leq 1$ 
 and $\zeta=1$ on $B(1)$.  Define $\zeta_n(x)=\zeta(x/n)$ and $u_n(t):=\zeta_nu(t)$ 
 for all $n\in\N$, $x\in\R^d$ and $t\geq 0$.  Then,  it holds $u_n(t)\in\L^S_{\sym}$ with compact
 support for all $n\in\N$.  We use $\partial_t u_n(t)=\zeta_n\partial_t u(t)$ for all $n\in\N$, to the obtain
 \[ \partial_t u(t)=\lim_{n\to\infty}\partial_t u_n(t)
 	=\lim_{n\to\infty}\frac{1}{2}\Delta u_n(t)+H(\nabla u_n(t))=\frac{1}{2}\Delta u(t)+H(\nabla u(t)),  \]
 where the limits are understood pointwise.  

 It remains to show inequality~\eqref{eq:u bound}.  Fix $t\geq 0$.  We know from Theorem~\ref{thm:gexp ext} 
 that $u\in C^1([0,\infty);M^\Phi)$.  Furthermore, equation~\eqref{eq:Lset} yields
 \[ \left\|\frac{u(s)-u(t)}{s-t}\right\|_\infty\leq\gamma(f) \quad\mbox{for all } s\geq 0.  \]
 Since convergence w.r.t.  $\|\cdot\|_\Phi$ implies convergence pointwise $\lambda$-a.e.  along a 
 subsequence, the previous inequality is preserved for the limit $s\to t$,  i.e., 
 $\|\partial_t u(t)\|_\infty\leq\gamma(f)$.  From the proof of~\cite[Theorem~5.2]{BK20},  
 we conclude
 \begin{equation} \label{eq:Lset sym}
  \|S(s)(-u(t))+u(t)\|_\infty\leq 2\gamma(f)s \quad\mbox{for all } s\geq 0.  
 \end{equation}
 Indeed,  choose $S^+(t):=S(t)$,  $S^-(t):=-S(t)(-\,\cdot\,)$,  $\beta:=1$ and consider the last line 
 of the proof.  Thus, by using inequality~\eqref{eq:Lset} and inequality~\eqref{eq:Lset sym},  we can 
 proceed similar to the proof of~\cite[Theorem~6.5]{BK20} to estimate $\|\Delta u(t)\|_\infty$ and 
 $\|\nabla u(t)\|_\infty$.  
\end{proof}

In the proof of Theorem~\ref{thm:reg},  we have shown $\L^S_{\sym}\cap C_c\subset D(A)$. 
A complete characterization of $\L^S_{\sym}\cap D(A)$ is beyond the scope of this paper.  Note that 
$\L^S_{\sym}$ is the domain of the Laplacian in $L^\infty$ restricted to $C_0\cap M^\Phi$, 
see~\cite[Theorem~3.1.7]{Lunardi95}.  But except for the second equality in Lemma~\ref{lem:Lset},  
the results in this section are independent of established PDE theory.  Nonetheless, by relying on the 
$L^p$-theory for the Laplacian presented in~\cite{LSU1968}, we see that the solution $u$ from 
Theorem~\ref{thm:reg} has the additional regularity $u(t)\in\bigcap_{p\geq 1}W^{2,p}_{\loc}$ 
for all $t\geq 0$. Moreover, for every $r\geq 0$, there exists a constant $C_r\geq 0$ depending 
only on $C$ and $r$ such that
\[ \sup_{t\geq 0}\|u(t)\|_{W^{2,p}(B(r))}\leq C_r.  \]
This follows immediately from inequality~\eqref{eq:u bound} and~\cite[Theorem~3.1.6]{Lunardi95}.

\bibliographystyle{abbrv}
\bibliography{Orlicz}

\end{document}